\theoremstyle{plain}
\newtheorem{theorem}{Theorem}[section]
\newtheorem{lem}[theorem]{Lemma}
\newtheorem{cor}[theorem]{Corollary}
\newtheorem{prop}[theorem]{Proposition}
\theoremstyle{definition}
\newcommand{\R}{\mathbb{R}}
\newcommand{\N}{\mathbb{N}}
\newcommand{\Ns}{\mathscr{N}_{s}}
\newcommand{\Q}{\mathcal{Q}}
\renewcommand{\leq}{\leqslant}
\renewcommand{\le}{\leqslant}
\renewcommand{\geq}{\geqslant}
\renewcommand{\ge}{\geqslant}
\begin{document}

\title[Linear theory for
a mixed operator with Neumann conditions]{Linear theory for
a mixed operator \\ with Neumann conditions}\thanks{
{\em Serena Dipierro}:
Department of Mathematics
and Statistics,
University of Western Australia,
35 Stirling Hwy, Crawley WA 6009, Australia.
{\tt serena.dipierro@uwa.edu.au}\\
{\em Edoardo Proietti Lippi}: 
Department of Mathematics and Computer Science,
University of Florence,
Viale Morgagni 67/A, 50134 Firenze, Italy.
{\tt edoardo.proiettilippi@unifi.it}
\\
{\em Enrico Valdinoci}:
Department of Mathematics
and Statistics,
University of Western Australia,
35 Stirling Hwy, Crawley WA 6009, Australia. {\tt enrico.valdinoci@uwa.edu.au}\\
The authors are members of INdAM.
The first and third authors
are members of AustMS and
are supported by the Australian Research Council
Discovery Project DP170104880 NEW ``Nonlocal Equations at Work''.
The first author is supported by
the Australian Research Council DECRA DE180100957
``PDEs, free boundaries and applications''. Part of this
work was carried
out during a very pleasant and fruitful visit of the second author to the
University of Western Australia, which we thank for the warm hospitality.}

\author{Serena Dipierro}
\author{Edoardo Proietti Lippi}
\author{Enrico Valdinoci}

\keywords{Long-range interactions, zero-flux condition, spectral theory, boundedness
of subsolutions.}
\subjclass[2010]{35Q92, 35R11, 60G22, 92B05}

\begin{abstract}
We consider here
a new type of mixed local and nonlocal equation under
suitable Neumann conditions. We discuss the spectral properties
associated to a weighted eigenvalue problem and present
a global bound for subsolutions.

The Neumann condition that we take into account
comprises, as a particular case, the one that has been recently introduced
in~[S.~Dipierro, X.~Ros-Oton, E.~Valdinoci,
Rev. Mat. Iberoam. (2017)].

Also, the results that we present here find a natural application
to a logistic equation motivated by biological problems
that has been recently considered in~[S.~Dipierro, E. Proietti Lippi, E.~Valdinoci,
preprint (2020)].
\end{abstract}
\maketitle

\section{Introduction}

The goal of this article is to discuss the spectral properties
and the $L^\infty$-bounds associated to a mixed local and nonlocal problem,
also in relation to some concrete motivations arising from
population dynamics and mathematical biology.
The methodology that we exploit here relies on functional analysis
and methods from (classical and nonlocal) partial differential
equations. Given the mixed character of the operator taken into account
and the new set of external conditions,
the standard mathematical framework to deal with
partial and integro-differential equations needs to be conveniently
modified to suit this new scenario.\medskip

More specifically, in~\cite{VERO}, we have introduced
a new set of nonlocal Neumann conditions,
extending those previously set forth in~\cite{MR3651008},
with the aim of dealing with a mathematical problem
motivated by ethology and biology.
More specifically, in~\cite{VERO}
a biological population was taken into consideration
within an environment which could be partially hostile.
The population competes for the resources
via a logistic equation and diffuses by a possible combination
of classical and nonlocal dispersal processes
(a detailed derivation of the diffusion model
is also presented in the appendix of~\cite{VERO}).

The population can be also provided by an additional birth growth
due to pollination, and the main question targeted in~\cite{VERO}
is whether or not it is possible to {\em rearrange the given
environmental resources (within given upper and lower constraints)
to allow for the survival of the species}.\medskip

The nonlinear
mathematical analysis developed in~\cite{VERO}
also relies on some auxiliary results
from the linear theory, such as {\em spectral decompositions}
and {\em uniform bounds for subsolutions}, which
have their independent interest. We collect here these
results, providing full proofs in detail.
\medskip

The setting in which we work is the following.
We let~$s\in(0,1)$ and~$\alpha$, $\beta\in[0,+\infty)$
with~$\alpha+\beta>0$,
and we consider the mixed operator
\begin{equation}\label{9uyhifd774} -\alpha\Delta +\beta(-\Delta)^s.\end{equation}
As customary, the operator~$(-\Delta)^s$
is the fractional Laplacian
$$ (-\Delta)^s u(x):=\frac12\,\int_{\R^n}\frac{2u(x)
-u(x+\zeta)-u(x-\zeta)}{|\zeta|^{n+2s}}\,d\zeta,$$
where other normalization constants have
been removed to ease the notation
(in any case, additional normalizing
constants do not affect our arguments,
and they can also be comprised into the parameter~$\beta$
in~\eqref{9uyhifd774} if one wishes to do so).\medskip

As a matter of fact, the theory that we develop
here, as well as in~\cite{VERO},
works in greater generality (e.g., one
can replace the fractional Laplacian with
a more general integro-differential operator
with only minor modifications in the main proofs),
but we rather limit ourselves to the paradigmatic
case of the fractional Laplacian for the sake of simplicity
in the exposition. Moreover, the results
obtained are new even in the case of ``purely nonlocal
diffusion'', i.e. when~$\alpha=0$
in~\eqref{9uyhifd774}.
\medskip

In terms of theory and applications,
we recall that operators with mixed classical
and fractional orders have been studied
under different points of views,
see for instance~\cite{MR2095633, MR2180302, MR2129093, MR2243708,
MR2422079, MR2542727, 
MR2653895, MR2911421, MR2912450,
MR2928344, MR2963799,
MR3051400, MR3194684, MR3485125,
MR3950697, MR3912710,
2017arXiv170605306D, 2018arXiv181107667D, 2019arXiv190702495A,
biagvecc, ABATANGELO, CABRE}
and the references therein.
Besides their clear mathematical interest,
these operators find natural applications
in biology, in view of the long-jump dispersal
strategies followed by several species,
as confirmed by a number of experimental data, see e.g.~\cite{NATU},
and theoretically studied under several perspectives,
see e.g.~\cite{MR1636644, MR2332679, MR2411225, MR2601079, MR2897881, 
MR2924452, MR3026598, MR3035974, MR3082317, MR3169773, MR3285831,
MR3498523, 3579567, MR3590646,
MR3590678, MR3639140, MR3771424}
(other concrete applications arise in plasma physics,
see~\cite{PhysRevE.87.063106} and the references therein).
\medskip

As usual, the mathematical framework in~\eqref{9uyhifd774}
is endowed by a spatial domain
on which the corresponding equation takes place.
For this,
we take a bounded open set~$\Omega\subset\R^n$
of class~$C^1$.
When~$\beta=0$, we take the additional hypothesis that
\begin{equation}\label{connected}
{\mbox{$\Omega$ is connected.}}\end{equation}
{F}rom the biological point of view, $\Omega$
represents the natural environment inhabited by
a given biological
population, whose density is described
by a function~$u:\R^n\to\R$
(as customary in nonlocal problems,
one has to prescribe functions in all of the space
to make sense of the fractional diffusive operators). \medskip

We prescribe external conditions to~$u$ in order
to make~$\Omega$ an ecological niche.
To this end, see~\cite{VERO}, we
set a variational formulation related to the
operator in~\eqref{9uyhifd774} which
endows the equation in the set~$\Omega$
with a suitable Neumann condition. The functional
space that we consider is
\begin{equation}\label{Xdefab}
X_{\alpha,\beta}=X_{\alpha,\beta}(\Omega):=
\begin{dcases}
H^1(\Omega)     & {\mbox{ if }} \;\beta=0,
\\
H^s_\Omega		& {\mbox{ if }} \;\alpha=0,
\\
H^1(\Omega)\cap H^s_\Omega & {\mbox{ if }}\; \alpha \beta\neq 0,
\end{dcases}
\end{equation}
where $$
H^s_\Omega:=
\left\lbrace  
u:\R^n\to\R \;{\mbox{ s.t. }}\;
u\in L^2(\Omega) \;{\mbox{ and }}\;
\iint_\Q\frac{|u(x)-u(y)|^2}{|x-y|^{n+2s}}\,dx\,dy <+\infty
\right\rbrace,
$$
and~${\mathcal{Q}}$ is the cross-shaped set
on~$\Omega$ given by
$$ {\mathcal{Q}}:=\big(\Omega\times\Omega\big)
\cup\big(\Omega\times(\R^n\setminus\Omega)\big)\cup
\big((\R^n\setminus\Omega)\times\Omega\big).$$
We observe
that~$X_{\alpha,\beta}$ is a Hilbert 
space with respect to the scalar product
\begin{equation}\label{scalar}\begin{split}
(u,v)_{X_{\alpha,\beta}}&\;:=\int_\Omega u(x)v(x)\,dx+
\alpha \int_\Omega \nabla u(x) \cdot\nabla v(x)\,dx
 \\
&\qquad+\frac{\beta}{2}
\iint_\Q\frac{(u(x)-u(y))(v(x)-v(y))}{|x-y|^{n+2s}}\,dx\,dy,
\end{split}\end{equation}
for every $u,v\in X_{\alpha,\beta}$.

We also define the seminorm
\begin{equation}\label{seminorm}
[u]^2_{X_{\alpha,\beta}}:=\frac\alpha2 
\int_\Omega |\nabla u(x)|^2\,dx
+\frac{\beta}{4}
\iint_\Q\frac{|u(x)-u(y)|^2}{|x-y|^{n+2s}}\,dx\,dy.
\end{equation}
Given~$f\in L^2(\Omega)$, we say
that~$u\in X_{\alpha,\beta}$ is a solution of
\begin{equation}\label{LAWEAKXA-S} -\alpha\Delta u+\beta(-\Delta)^s u=f\qquad{\mbox{ in }}\;\Omega\end{equation}
with~$(\alpha,\beta)$-Neumann condition
if
\begin{equation}\label{LAWEAKXA}
\alpha \int_\Omega \nabla u(x) \cdot\nabla v(x)\,dx
+\frac{\beta}{2}
\iint_\Q\frac{(u(x)-u(y))(v(x)-v(y))}{|x-y|^{n+2s}}\,dx\,dy
=\int_\Omega f(x)\,v(x)\,dx,\end{equation}
for every~$v\in X_{\alpha,\beta}$.

We remark that, formally, the external condition in~\eqref{LAWEAKXA}
can be detected by taking~$v$ with~$v=0$ in~$\R^n\setminus\overline{\Omega}$
(which produces a normal derivative prescription along~$\partial\Omega$)
and then by taking~$v=0$ in~$\overline\Omega$
(which produces a nonlocal prescription in~$\R^n\setminus\overline\Omega$):
that is, formally, the external condition in~\eqref{LAWEAKXA}
can be written in the form
\begin{equation}\label{NEU-3}
\begin{dcases}
\Ns u=0   & \qquad{\mbox{in }}\; \R^n \setminus \overline\Omega,
\\
\displaystyle\frac{\partial u}{\partial \nu}=0   &\qquad{\mbox{on }} \;\partial\Omega,\end{dcases}
\end{equation}
where~$\nu$ is the exterior normal to~$\Omega$,
and we use the notation
\begin{equation}\label{DRSPV}
\Ns u(x):=\int_\Omega \frac{u(x)-u(y)}{|x-y|^{n+2s}}\,dy\qquad\qquad
{\mbox{for every }}\;x\in\R^n \setminus \overline\Omega,\end{equation}
and the first condition in~\eqref{NEU-3}
being dropped when~$\alpha=0$, the second condition
in~\eqref{NEU-3}
being dropped when~$\beta=0$.\medskip

We recall that the nonlocal Neumann prescription
in~\eqref{DRSPV} is precisely the one introduced in~\cite{MR3651008}
in light of probabilistic consideration
(i.e., a particle following a~$\frac{s}2$-stable
process is sent back to the original domain
by following the same process).
Also, as shown in~\cite{MR3651008},
the setting in~\eqref{DRSPV} provides a coherent
functional analysis setting.\medskip

In the situation treated in this paper,
this setting is superimposed to a classical framework
when~$\alpha\ne0$: in particular,
we remark that, when~$\alpha\ne0$ and~$\beta\ne0$,
both the prescriptions in~\eqref{NEU-3}
are in force, but they do not cause any overdetermined
conditions, and indeed, as shown in~\cite{VERO},
the notion of solutions in this case is well-posed.\medskip

Moreover, we stress that the setting in~\eqref{LAWEAKXA}
provides a ``zero-flux''
condition, in the sense that if~\eqref{LAWEAKXA-S}
has a solution, then necessarily
\begin{equation}\label{ZERAJS-jPP} \int_\Omega f(x)\,dx=0,\end{equation}
as it can be seen by taking~$v:=1$ in~\eqref{LAWEAKXA}.
\medskip

We now describe in detail the results
stated and proved in this paper.

\subsection{Eigenvalue and eigenfunctions
for the $(\alpha,\beta)$-Neumann condition}

The first set of results that we discuss
here is related to a generalized eigenvalue problem
associated to equation~\eqref{LAWEAKXA-S}
with~$(\alpha,\beta)$-Neumann condition.

Namely, we let~$m:\Omega\to\R$ and
we consider the weighted eigenvalue equation
\begin{equation}\label{probauto}
\begin{dcases}
-\alpha\Delta u +\beta(-\Delta)^su= \lambda mu   & \quad{\mbox{
in }}\;\Omega,
\\
{\mbox{with $(\alpha,\beta)$-Neumann condition.}}\end{dcases}
\end{equation}
According to~\eqref{LAWEAKXA}
the notion of solution in~\eqref{probauto} is in the weak sense
in the space~$X_{\alpha,\beta}$: namely 
we say that~$u\in X_{\alpha,\beta}$ is a solution of~\eqref{probauto}
if
\begin{equation}\label{ORAGSBNmer}\alpha \int_\Omega \nabla u(x) \cdot\nabla v(x)\,dx
+\frac{\beta}{2}
\iint_\Q\frac{(u(x)-u(y))(v(x)-v(y))}{|x-y|^{n+2s}}\,dx\,dy
=\lambda \int_\Omega m(x) u(x)v(x)\,dx,\end{equation}
for every~$v\in X_{\alpha,\beta}$.
\medskip

To deal with the integrability condition of the weight~$m$,
it is convenient to
consider the following ``critical'' exponent:
\begin{equation}\begin{split}\label{qbar}
\underline{q}:=\;&\begin{dcases}
\displaystyle\frac{2^*}{2^*-2} & {\mbox{ if $\beta=0$ and~$n>2$}},\\
\displaystyle\frac{2^*_s}{2^*_s-2} & {\mbox{ if $\beta\ne0$ and~$n>2s$}},\\
1 & {\mbox{ if $\beta=0$ and~$n\le2$, or
if $\beta\ne0$ and~$n\le2s$}},\end{dcases}\\=\;&
\begin{dcases}
\displaystyle\frac{n}{2} & {\mbox{ if $\beta=0$ and~$n>2$}},\\
\displaystyle\frac{n}{2s} & {\mbox{ if $\beta\ne0$ and~$n>2s$}},\\
1 & {\mbox{ if $\beta=0$ and~$n\le2$, or
if $\beta\ne0$ and~$n\le2s$.}}
\end{dcases}
\end{split}\end{equation}
As customary, the exponent~$2^*_s$ denotes the fractional
Sobolev critical exponent for~$n>2s$
and it is equal to~$\frac{2n}{n-2s}$. Similarly,
the exponent~$2^*$ denotes the classical
Sobolev critical exponent for~$n>2$
and it is equal to~$\frac{2n}{n-2}$.

Furthermore, we suppose that
\begin{equation}\label{feuwtywvv123445}
m\in L^q(\Omega),\quad {\mbox{for some $q \in \big(\underline{q},+\infty\big]$,}}
\end{equation}
where~$\underline{q}$ is given in~\eqref{qbar}.

In this setting, problem \eqref{probauto}
admits a spectral decomposition of classical flavor,
according to the following result:

\begin{prop}\label{PROAUTOVA}
Suppose that~$m^+$, $m^-\not\equiv 0$ and\footnote{As customary, we use the standard
notation
$$ m^+(x):=\max\{0,m(x)\}\qquad {\mbox{ and }}\qquad
m^-(x):=\max\{0,-m(x)\}. $$} that
\begin{equation}\label{yt5645867600000}
\int_\Omega m(x)\,dx\neq 0.\end{equation}
Then, problem \eqref{probauto} admits two unbounded sequences of 
eigenvalues:
\[
\cdots\le\lambda_{-2}\leq \lambda_{-1}<\lambda_0=0
<\lambda_1\leq 
\lambda_2 \le\cdots\;\;.
\]
In particular, if 
$$\int_\Omega m(x)\,dx<0,$$ then
\begin{equation}\label{lopouygbv}
\lambda_1=\min_{u\in X_{\alpha,\beta}}
\left\lbrace [u]^2_{X_{\alpha,\beta}}\, {\mbox{ s.t. }}
\int_\Omega m(x)u^2(x)\,dx=1 \right\rbrace
\end{equation}
where we use the notation in~\eqref{seminorm}.
If instead
$$\int_\Omega m(x)\,dx>0,$$ then
\[
\lambda_{-1}=-\min_{u\in X_{\alpha,\beta}}
\left\lbrace [u]^2_{X_{\alpha,\beta}} \,{\mbox{ s.t. }}
\int_\Omega m(x)u^2(x)\,dx=-1 \right\rbrace.
\]
\end{prop}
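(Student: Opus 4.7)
The plan is to construct the eigenvalues by a constrained minimization scheme coupled with an iterative orthogonalization procedure. First, the constant function $u\equiv1$ belongs to $X_{\alpha,\beta}$ and trivially satisfies \eqref{ORAGSBNmer} with $\lambda=0$, so $\lambda_0=0$ is always an eigenvalue with constant eigenfunction (this is the algebraic counterpart of the zero-flux condition \eqref{ZERAJS-jPP}). To construct $\lambda_1$ in the case $\int_\Omega m<0$, I would study the constrained minimization in \eqref{lopouygbv}. The admissible set is non-empty because $m^+\not\equiv0$: a smooth test function supported in a subset where $m>0$, suitably rescaled, provides an admissible element.

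The central step is to show that a minimizing sequence $\{u_k\}\subset X_{\alpha,\beta}$ is bounded in $X_{\alpha,\beta}$. To this end I would decompose $u_k=\overline{u}_k+\tilde{u}_k$, where $\overline{u}_k$ is the mean of $u_k$ on $\Omega$ and $\tilde{u}_k$ has zero mean. A Poincaré-type inequality (classical for $\alpha\neq0$, and of nonlocal Neumann type, as developed in \cite{MR3651008}, for $\alpha=0$) controls $\|\tilde{u}_k\|_{L^2(\Omega)}$ by $[\tilde{u}_k]_{X_{\alpha,\beta}}=[u_k]_{X_{\alpha,\beta}}$ (the seminorm is invariant under additive constants), and the latter is uniformly bounded along the minimizing sequence. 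Inserting this decomposition into the constraint yields
\begin{equation*}
1=\overline{u}_k^{\,2}\int_\Omega m+2\,\overline{u}_k\int_\Omega m\,\tilde{u}_k+\int_\Omega m\,\tilde{u}_k^{\,2},
\end{equation*}
so the sign assumption $\int_\Omega m<0$ forces the quadratic term in $\overline{u}_k$ to dominate for $|\overline{u}_k|$ large, producing a contradiction. Hence $|\overline{u}_k|$ is bounded and so is $\{u_k\}$ in $X_{\alpha,\beta}$. Along a subsequence, $u_k$ then converges weakly in $X_{\alpha,\beta}$ and strongly in $L^p(\Omega)$ for a suitable $p>2$ by the compact Sobolev embedding valid in the present framework (classical for $\alpha\neq0$; see \cite{MR3651008} for $\alpha=0$); choosing $p$ appropriately in terms of the exponent $q$ from \eqref{feuwtywvv123445} and applying H\"older's inequality, one obtains $\int_\Omega m u_k^{\,2}\to\int_\Omega m u_\infty^{\,2}=1$, so the weak limit $u_\infty$ is admissible. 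Weak lower semicontinuity of $[\cdot]_{X_{\alpha,\beta}}^2$ produces a minimizer, and the Euler--Lagrange equation identifies $u_\infty$ as an eigenfunction corresponding to $\lambda_1$ (up to the standard rescaling by the Lagrange multiplier coming from \eqref{ORAGSBNmer}). Strict positivity $\lambda_1>0$ follows because $[u_\infty]_{X_{\alpha,\beta}}=0$ would force $u_\infty$ to be constant, contradicting the constraint since $\int_\Omega m<0$.

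Subsequent positive eigenvalues are produced inductively: given eigenpairs $(\lambda_j,\phi_j)$ for $j=1,\dots,k$ normalized by $\int_\Omega m\,\phi_j\phi_\ell=\delta_{j\ell}$, I would set
\begin{equation*}
\lambda_{k+1}:=\inf\bigg\{[u]_{X_{\alpha,\beta}}^2\,:\,u\in X_{\alpha,\beta},\ \int_\Omega m u^2=1,\ \int_\Omega m\,u\,\phi_j=0\text{ for every }j=1,\dots,k\bigg\}
\end{equation*}
and repeat the compactness argument. Unboundedness of $\{\lambda_k\}$ is standard: a bounded family of eigenfunctions orthonormal in $L^2(|m|\,dx)$ would admit a subsequence converging strongly in $L^2(|m|\,dx)$, contradicting orthonormality. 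The sequence of negative eigenvalues is obtained by replacing $m$ with $-m$ (equivalently, by minimizing $[u]^2_{X_{\alpha,\beta}}$ subject to $\int_\Omega m u^2=-1$), which requires $m^-\not\equiv0$ to ensure a non-empty admissible set; the explicit formula for $\lambda_{-1}$ when $\int_\Omega m>0$ follows in the same way. The main technical obstacle throughout is the compactness of the minimizing sequences, which is reduced to controlling the constant component of the decomposition via the sign assumption \eqref{yt5645867600000}.
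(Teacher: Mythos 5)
Your proof is essentially correct but follows a genuinely different route from the paper. The paper handles $\lambda_0=0$ separately (constant eigenfunctions) and then restricts the eigenvalue problem to the subspace $V=\{u\in X_{\alpha,\beta}\ :\ \int_\Omega mu=0\}$ from \eqref{defV}, on which it establishes a Poincar\'e inequality (Lemma~\ref{POI66}, proved by a compactness/contradiction argument). It then introduces a symmetric compact operator $T:V\to V$ defined by $\langle Tv,w\rangle_{X_{\alpha,\beta}}=\int_\Omega mvw$, rewrites the problem as $u=\lambda Tu$, and invokes classical spectral theory of self-adjoint compact operators (de Figueiredo, Brown--Lin) to obtain the full two-sided unbounded spectrum and the min-max characterizations in one stroke. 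You instead work directly with the constrained minimization in \eqref{lopouygbv} and control compactness of minimizing sequences by a different device: decompose $u_k=\overline{u}_k+\tilde u_k$ into mean and mean-free parts, bound $\tilde u_k$ by a mean-zero Poincar\'e inequality, and then use the \emph{sign} condition $\int_\Omega m<0$ to show that the constant part $\overline{u}_k$ must be bounded (otherwise the constraint $\int_\Omega mu_k^2=1$ would be violated). This is an elementary and self-contained argument, but it carries the burden of redoing, by hand, what compact-operator spectral theory gives for free: the inductive construction of higher eigenvalues, the verification that the orthogonality constraints stay compatible (which needs care because $\int_\Omega m\,\phi\psi$ is an \emph{indefinite} form — your phrase ``orthonormal in $L^2(|m|\,dx)$'' is slightly off; you mean orthogonal with respect to the indefinite form, which is positive on each positive eigenspace since $\int_\Omega m\phi^2=\lambda^{-1}[\phi]^2$ there), and the unboundedness of $\{\lambda_k\}$ (which follows from your compactness argument since all nonzero eigenfunctions automatically lie in $V$ by \eqref{ZERAJS-jPP}). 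Both approaches are valid; the paper's is shorter and modular, yours is more concrete and avoids setting up the compact operator $T$, at the price of additional bookkeeping in the induction.
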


The first positive eigenvalue $\lambda_1$, as given by
Proposition~\ref{PROAUTOVA}, has the following structural
properties:

\begin{prop}\label{prop:lambda}
Suppose that~$m^+\not\equiv 0$ and
$$\int_{\Omega} m(x)\,dx<0.$$
Then,
the first positive eigenvalue $\lambda_1$ of \eqref{probauto} is 
simple, and the first eigenfunction $e$ can be taken such that~$e\ge0$.

A similar statement holds if $m^-\not\equiv 0$ and
$$ \int_{\Omega} m(x)\,dx>0.$$
\end{prop}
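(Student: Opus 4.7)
The plan is to combine the variational characterization from Proposition~\ref{PROAUTOVA} with the sign structure of the seminorm $[\cdot]_{X_{\alpha,\beta}}$ and a strong maximum principle for the mixed operator in~\eqref{9uyhifd774}.

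\medskip

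First I would use Proposition~\ref{PROAUTOVA} to obtain a minimizer $e\in X_{\alpha,\beta}$ of~\eqref{lopouygbv}. Stampacchia's identity $|\nabla|e||=|\nabla e|$ a.e.\ together with the pointwise inequality $\bigl||e(x)|-|e(y)|\bigr|\le|e(x)-e(y)|$ immediately yields $[\,|e|\,]^{2}_{X_{\alpha,\beta}}\le[e]^{2}_{X_{\alpha,\beta}}$, while the constraint is preserved since $\int_{\Omega}m(x)|e(x)|^{2}\,dx=\int_{\Omega}m(x)e(x)^{2}\,dx=1$. Hence $|e|$ is itself a minimizer and, replacing $e$ by $|e|$, one obtains the nonnegative first eigenfunction claimed in the statement.

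\medskip

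Next I would upgrade nonnegativity to strict positivity $e>0$ in $\Omega$ through a strong maximum principle. For $\beta\neq 0$ the argument is nonlocal: if $e(x_{0})=0$ at some $x_{0}\in\Omega$, then, since $e\ge 0$ on $\R^{n}$, the pointwise identity $(-\Delta)^{s}e(x_{0})=-\frac{1}{2}\int_{\R^{n}}\frac{e(x_{0}+\zeta)+e(x_{0}-\zeta)}{|\zeta|^{n+2s}}\,d\zeta\le0$, together with $-\alpha\Delta e(x_{0})\le0$ at the interior minimum $x_{0}$ and the fact that the right-hand side of~\eqref{probauto} vanishes at $x_{0}$, forces $e\equiv 0$ in $\R^{n}$, a contradiction. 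For $\beta=0$ the same conclusion follows from the classical strong maximum principle for $-\Delta$ on the connected set $\Omega$ (see~\eqref{connected}). A Hopf-type boundary lemma, compatible with the $(\alpha,\beta)$-Neumann prescription~\eqref{NEU-3}, extends this to $e>0$ on $\overline{\Omega}$.

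\medskip

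For the simplicity claim I would argue by contradiction, assuming that $e_{1},e_{2}\in X_{\alpha,\beta}$ are two linearly independent first eigenfunctions. By the previous two steps, and the regularity theory available for~\eqref{probauto} with $m\in L^{q}$, $q>\underline{q}$, one may take $e_{1},e_{2}$ continuous and strictly positive on $\overline{\Omega}$. Then $c_{\star}:=\min_{\overline{\Omega}}(e_{1}/e_{2})\in(0,+\infty)$ is attained at some $x_{\star}\in\overline{\Omega}$, and the function $w:=e_{1}-c_{\star}e_{2}$ belongs to $X_{\alpha,\beta}$, lies itself in the eigenspace of $\lambda_{1}$, satisfies $w\ge 0$ in $\Omega$ and $w(x_{\star})=0$. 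Applying once more the strong maximum principle (together with the Hopf lemma when $x_{\star}\in\partial\Omega$), $w\not\equiv0$ would force $w>0$ on $\overline{\Omega}$, contradicting $w(x_{\star})=0$; hence $w\equiv 0$, i.e.\ $e_{1}=c_{\star}e_{2}$, against the assumed linear independence.

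\medskip

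The main obstacle is the careful formulation of the strong maximum principle (and the corresponding Hopf-type lemma at the boundary) for the mixed operator $-\alpha\Delta+\beta(-\Delta)^{s}$ under the $(\alpha,\beta)$-Neumann condition~\eqref{NEU-3}. For $\beta\neq0$ the nonlocal contribution gives the strong maximum principle essentially for free from the pointwise formula for $(-\Delta)^{s}$, while for $\beta=0$ one invokes classical elliptic tools together with~\eqref{connected}; the mixed regime $\alpha\beta\neq 0$ requires showing that the nonlocal flux condition $\Ns u=0$ in $\R^{n}\setminus\overline{\Omega}$ and the classical Neumann condition on $\partial\Omega$ cooperate rather than overdetermine the problem, so that strict positivity on $\overline{\Omega}$ really does propagate.
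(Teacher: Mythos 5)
Your proposal takes a genuinely different route from the paper's, and unfortunately it relies on ingredients that are not available in the framework of the paper, so as written it has real gaps.

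The paper's proof is \emph{entirely variational}. To show that a first eigenfunction does not change sign, the paper does not use any strong maximum principle for the operator; instead it notes that both $u^+$ and $u^-$ separately saturate the Rayleigh quotient in~\eqref{lopouygbv} (this uses the elementary fraction inequality between $\frac{a_1+a_2}{b_1+b_2}$ and $\max\{\frac{a_1}{b_1},\frac{a_2}{b_2}\}$), and then exploits the \emph{equality case} of the inequality~\eqref{iehtierhgg}: equality forces
$\iint_{\mathcal{Q}} u^+(x)u^-(y)\,|x-y|^{-n-2s}\,dx\,dy = 0$,
hence $u^+ u^-\equiv 0$ on $\mathcal{Q}$, and one of $u^\pm$ must vanish. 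For geometric simplicity, the paper takes a nonnegative eigenfunction $u_2$, forms the scalar $a=\int u_1/\int u_2$ so that $u_1-au_2$ has zero integral over $\Omega$, and uses that this eigenfunction also does not change sign to force it to vanish. Algebraic simplicity is handled by a Kernel argument for the compact self-adjoint operator $T$. None of this needs pointwise regularity of the eigenfunction, continuity up to $\partial\Omega$, or any Hopf-type lemma.

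Your approach instead hinges on a \emph{pointwise} strong maximum principle and a Hopf boundary lemma, and this is where the gaps are. First, the eigenfunction $e$ is only known to lie in $X_{\alpha,\beta}$ (and, a posteriori, in $L^\infty(\Omega)$ by Corollary~\ref{G:BOUND}); the pointwise evaluation of $(-\Delta)^s e(x_0)$ and $-\alpha\Delta e(x_0)$ at an interior minimum $x_0$, and the identification with $\lambda_1 m(x_0)e(x_0)$, require $e$ to be (at least) $C^2$ near $x_0$ and $m$ to be defined pointwise, neither of which follows from the hypotheses ($m\in L^q$ only). The paper even flags that regularity theory for this $(\alpha,\beta)$-Neumann problem is deferred to a forthcoming work. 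Second, your simplicity argument needs $e_1,e_2$ continuous up to $\overline{\Omega}$ so that $\min_{\overline{\Omega}}(e_1/e_2)$ is attained, and it needs a Hopf lemma compatible with the mixed Neumann condition~\eqref{NEU-3}; no such lemma is established in the paper, and proving one for $-\alpha\Delta+\beta(-\Delta)^s$ under the nonlocal zero-flux condition is itself a nontrivial task. Third, a smaller logical point: your step~1 only yields that \emph{some} first eigenfunction is nonnegative (by replacing $e$ with $|e|$); to run the $c_\star$-argument with an \emph{arbitrary} competitor $e_2$ that might change sign, you must formulate it as $c_\star=\sup\{t:\,e_2-t e_1\ge 0\}$ with $e_1>0$, but this again needs continuity of $e_2$ on $\overline{\Omega}$.

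In summary, your scheme could work in a setting where interior and boundary regularity and strong maximum/Hopf principles for the mixed operator have been established, but those prerequisites are precisely what the paper avoids. The paper's purely variational argument — the equality case of the seminorm splitting for $u^+,u^-$, plus the zero-average trick $\int_\Omega(u_1-au_2)=0$, plus the compact-operator kernel argument for algebraic multiplicity — works at the natural $X_{\alpha,\beta}$ level of regularity, which is what makes it the right tool here.
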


To deal with the eigenvalue problem in~\eqref{probauto},
it is convenient to recall the notation in~\eqref{Xdefab} and
to introduce the space
\begin{equation}\label{defV}
V_m:=\left\lbrace u\in X_{\alpha,\beta}\,{\mbox{ s.t. }}
\int_\Omega m(x)u(x)\,dx=0 \right\rbrace.
\end{equation}
To ease the notation, we will simply write~$V$ instead of~$V_m$ in what follows.
We observe that, in view of~\eqref{ZERAJS-jPP},
\begin{equation}\label{ALLEI}
{\mbox{all the eigenfunctions of problem~\eqref{probauto}
belong to~$V$.}}
\end{equation}
As we will see in Corollary~\ref{G:BOUND},
a global bound holds true for these eigenfunctions.
To obtain this bound, we develop a general theory,
of independent interest, to bound globally from below
the weak subsolutions that fulfill the $(\alpha,\beta)$-Neumann
conditions, as we now discuss in detail.

\subsection{Global uniform bounds for
subsolutions under $(\alpha,\beta)$-Neumann
condition}

We give here an $L^\infty$-result for solutions,
and more general, subsolutions of
equation~\eqref{LAWEAKXA-S} 
under~$(\alpha,\beta)$-Neumann condition.
To apply this bound to the eigenfunctions
of problem~\eqref{probauto}, it is also convenient to allow an additional
linear term in the equation that we take into account.
The result that we have is the following one:

\begin{theorem}\label{OSC55}
Let~$V$ be as in \eqref{defV} and~$\underline{q}$ be as in~\eqref{qbar}.
Let~$q\in \left(\underline{q},+\infty\right)$ and~$c$, $f\in L^q(\Omega)$.
Let~$u\in V$ satisfy
\begin{equation}\label{WEAK-DGSUBSOL}
\begin{split}&
\alpha \int_{\Omega} \nabla u \cdot\nabla { v }\,dx 
+\frac{\beta}{2}
\iint_{{\mathcal{Q}}}\frac{(u(x)-u(y))({ v }(x)-{ v }(y))}{|x-y|^{{{n}}+2s}}\,dx\,dy
\\&\qquad\le
\int_{\Omega}\big(c(x)u(x)+ f(x)\big)\,{ v }(x)\,dx
\end{split}
\end{equation}
for each~${ v }\in X_{\alpha,\beta}$ such that~${ v }\ge0$ in~$\Omega$.

Then, there exists~$C>0$,
depending on~${n}$, $\alpha$, $\beta$,
$q$, $\Omega$, $\|c\|_{L^q(\Omega)}$ and~$m$ such that
\begin{equation}\label{BOU-o1}
\sup_{\Omega} u^+\le
C\,\left(
\| u^+\|_{L^2(\Omega)}+\|f\|_{L^q(\Omega)}
\right).
\end{equation}
\end{theorem}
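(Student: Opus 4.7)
My plan is to prove Theorem~\ref{OSC55} by De Giorgi--Stampacchia iteration, adapted to accommodate both the local and nonlocal parts of the operator simultaneously, as well as the Neumann-type prescription encoded by the space $X_{\alpha,\beta}$.

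For each $k\ge0$ the strategy is to test \eqref{WEAK-DGSUBSOL} with $v_k:=(u-k)^{+}$, which lies in $X_{\alpha,\beta}$ and is non-negative, hence admissible. Using that $\nabla u\cdot\nabla v_k=|\nabla v_k|^{2}$ almost everywhere, together with the standard pointwise truncation inequality $(u(x)-u(y))(v_k(x)-v_k(y))\ge(v_k(x)-v_k(y))^{2}$ (which one checks case-by-case according to whether each of $u(x),u(y)$ lies above or below $k$), the left-hand side of \eqref{WEAK-DGSUBSOL} is bounded below by $2[v_k]_{X_{\alpha,\beta}}^{2}$. On the super-level set $A_k:=\{u>k\}\cap\Omega$ one has $u=k+v_k$, and the right-hand side rewrites as $\int_{A_k}(ck+cv_k+f)v_k\,dx$, which I would estimate by Hölder's inequality exploiting $c,f\in L^{q}(\Omega)$ with $q>\underline{q}$ and the Sobolev embedding of $X_{\alpha,\beta}$ into $L^{p^{*}}(\Omega)$ (where $p^{*}=2^{*}$ if $\beta=0$ and $p^{*}=2^{*}_{s}$ if $\beta\ne0$). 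The choice $\underline{q}=p^{*}/(p^{*}-2)$ is precisely what ensures that strictly positive powers of $|A_k|$ come out of these Hölder estimates and drive the iteration.

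The main obstacle is that the truncation $v_k$ generally does not belong to $V$, so the Poincaré-type inequality on $V$ cannot be invoked directly to pass from $[v_k]_{X_{\alpha,\beta}}$ to $\|v_k\|_{L^{p^{*}}(\Omega)}$. I would circumvent this by combining the full embedding $\|v_k\|_{L^{p^{*}}(\Omega)}\le C\bigl(\|v_k\|_{L^{2}(\Omega)}+[v_k]_{X_{\alpha,\beta}}\bigr)$ with the trivial Hölder bound $\|v_k\|_{L^{2}(\Omega)}\le|A_k|^{\frac{1}{2}-\frac{1}{p^{*}}}\|v_k\|_{L^{p^{*}}(\Omega)}$. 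If the iteration is started from a threshold $k_{0}$ chosen large enough that $|A_{k_{0}}|$ is suitably small---which by Chebyshev happens as soon as $k_{0}$ is a fixed multiple of $\|u^{+}\|_{L^{2}(\Omega)}$---then this contribution can be absorbed into the left-hand side, yielding an effective Poincaré inequality $\|v_k\|_{L^{p^{*}}(\Omega)}\le C'[v_k]_{X_{\alpha,\beta}}$ for every $k\ge k_{0}$. A completely analogous absorption handles the quadratic term $\int_{A_k}|c|v_k^{2}\,dx$, using that $q>\underline{q}$ gives $2q'<p^{*}$.

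Putting these ingredients together, one obtains for $h>k\ge k_{0}$ the Stampacchia-type iteration inequality
$$(h-k)\,|A_{h}|^{1/p^{*}}\le\|v_k\|_{L^{p^{*}}(\Omega)}\le C_{*}\bigl(1+k+\|f\|_{L^{q}(\Omega)}\bigr)\,|A_k|^{\delta},$$
with some exponent $\delta>1/p^{*}$ coming from $q>\underline{q}$. The classical geometric decay lemma for such sequences then gives $|A_{K}|=0$ for a level $K$ quantitatively controlled by $|A_{k_{0}}|$ (hence by $\|u^{+}\|_{L^{2}(\Omega)}$), by $\|f\|_{L^{q}(\Omega)}$, $\|c\|_{L^{q}(\Omega)}$, and by $n,\alpha,\beta,q,\Omega$, which is exactly the bound claimed in \eqref{BOU-o1}. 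The points that require the most care are the absorption argument in the intermediate step, where one must keep the dependence on $\|u^{+}\|_{L^{2}(\Omega)}$ sharp, and the verification that the truncation $v_k=(u-k)^{+}$ belongs to $X_{\alpha,\beta}$ with the required integrability over the cross-shaped set $\mathcal{Q}$ in the nonlocal case.
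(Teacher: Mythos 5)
Your proposal is correct and follows essentially the paper's own argument: test \eqref{WEAK-DGSUBSOL} with $v_k:=(u-k)^+$, use the pointwise inequality $(u(x)-u(y))(v_k(x)-v_k(y))\ge(v_k(x)-v_k(y))^2$ to control the seminorm of $v_k$ from below, estimate the right-hand side via H\"older with $q>\underline{q}$, and run a De Giorgi--Stampacchia iteration starting from a level $k_0$ of order $\|u^+\|_{L^2(\Omega)}$ chosen so that $|A_{k_0}|$ is small.

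The one place where you are more careful than the paper is the Sobolev step. You correctly observe that $v_k$ typically does not belong to $V$, so the $V$-Poincar\'e inequality of Lemma~\ref{POI66} (and hence the Sobolev inequality of Lemma~\ref{NEWSOB}) cannot be invoked for $v_k$ as stated; you fix this by combining the unrestricted embedding $\|v_k\|_{L^{p^*}(\Omega)}\lesssim\|v_k\|_{L^2(\Omega)}+[v_k]_{X_{\alpha,\beta}}$ with the elementary bound $\|v_k\|_{L^2(\Omega)}\le|A_k|^{\frac12-\frac1{p^*}}\|v_k\|_{L^{p^*}(\Omega)}$ and absorbing the first term once $|A_k|$ is small, which Chebyshev guarantees for $k$ above a multiple of $\|u^+\|_{L^2(\Omega)}$. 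The paper's proof applies Lemma~\ref{NEWSOB} directly to $v=(u-k)^+$ without remarking on membership in $V$, implicitly relying on the smallness of $\mathcal{Z}$ that it arranges only afterwards; your treatment is the cleaner justification of this step. The remaining discrepancy --- you iterate on $|A_k|$ via the Stampacchia decay lemma, whereas the paper iterates on $\varphi(k_\ell):=\int_{A(k_\ell)}(u-k_\ell)^2\,dx$ along a geometric sequence of levels --- is cosmetic; both routes deliver \eqref{BOU-o1} with the stated dependence of the constant.
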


In a forthcoming paper, we plan to use Theorem~\ref{OSC55}
as the cornerstone for a regularity theory
for mixed equations under $(\alpha,\beta)$-Neumann conditions.\medskip

As a consequence of~\eqref{ALLEI}
and Theorem~\ref{OSC55} (applied with~$f:=0$ and~$c:=\lambda m$),
we easily obtain the following global bound for eigenfunctions:

\begin{cor}\label{G:BOUND}
All the eigenfunctions of problem~\eqref{probauto}
belong to~$L^\infty(\Omega)$.\end{cor}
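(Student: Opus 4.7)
The plan is to obtain the $L^\infty$ bound as a direct application of Theorem~\ref{OSC55} to each eigenfunction, with the trick of running the argument once for the eigenfunction and once for its negative in order to control both the positive and the negative parts.

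First I would fix an eigenfunction $u \in X_{\alpha,\beta}$ with eigenvalue $\lambda$ and record two facts. On the one hand, by~\eqref{ALLEI} one has $u \in V$, which is the ambient space required by the hypothesis of Theorem~\ref{OSC55}. On the other hand, setting $c(x) := \lambda\, m(x)$ and $f \equiv 0$, the integrability assumption~\eqref{feuwtywvv123445} on the weight $m$ immediately gives $c \in L^q(\Omega)$ for some $q \in (\underline{q}, +\infty)$, and $f \in L^q(\Omega)$ trivially. (If the paper allows $q = +\infty$, one simply works with any finite $q_0 \in (\underline q, q)$ by inclusion on the bounded domain $\Omega$.)

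Next, I would rewrite the eigenvalue equation~\eqref{ORAGSBNmer} as
\[
\alpha \int_{\Omega} \nabla u \cdot \nabla v \,dx
+ \frac{\beta}{2} \iint_{\mathcal{Q}} \frac{(u(x)-u(y))(v(x)-v(y))}{|x-y|^{n+2s}}\,dx\,dy
= \int_{\Omega} \bigl(c(x) u(x) + f(x)\bigr)\, v(x)\,dx
\]
for every $v \in X_{\alpha,\beta}$. Since this is an equality, the inequality~\eqref{WEAK-DGSUBSOL} in Theorem~\ref{OSC55} holds a fortiori, in particular when $v \ge 0$ in $\Omega$. Applying the theorem yields the estimate
\[
\sup_{\Omega} u^+ \;\le\; C\,\bigl(\|u^+\|_{L^2(\Omega)} + \|f\|_{L^q(\Omega)}\bigr) = C\,\|u^+\|_{L^2(\Omega)},
\]
which is finite because $u \in X_{\alpha,\beta}$ embeds into $L^2(\Omega)$.

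To handle the negative part, I would note that $-u$ is again an eigenfunction of~\eqref{probauto} with the same eigenvalue $\lambda$, and it still lies in $V$ (the defining integral constraint in~\eqref{defV} is linear). Hence the very same application of Theorem~\ref{OSC55} with $u$ replaced by $-u$ gives $\sup_{\Omega} u^- \le C\,\|u^-\|_{L^2(\Omega)} < +\infty$. Combining the two bounds yields $u \in L^\infty(\Omega)$, as desired. I do not expect any real obstacle here: all the heavy lifting is contained in Theorem~\ref{OSC55}, and the only thing to verify carefully is that the structural hypotheses (membership in $V$, integrability of the coefficient $c$, and the validity of the one-sided variational inequality for all nonnegative test functions) are satisfied, which they are by inspection.
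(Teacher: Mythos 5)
Your proposal is correct and matches the paper's approach exactly: the paper derives the corollary by invoking Theorem~\ref{OSC55} with $f:=0$ and $c:=\lambda m$, using~\eqref{ALLEI} to place the eigenfunction in $V$. You also spell out the two small points the paper leaves implicit, namely replacing $q=+\infty$ by a finite $q_0\in(\underline{q},q)$ on the bounded domain, and applying the theorem a second time to $-u$ (which is again an eigenfunction in $V$) to control $u^-$, both of which are indeed needed to upgrade the one-sided estimate~\eqref{BOU-o1} to full membership in $L^\infty(\Omega)$.
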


In the rest of the paper, we provide full
detailed proofs for
Propositions~\ref{PROAUTOVA}
and~\ref{prop:lambda} (in Section~\ref{AUTOPER})
and for
Theorem~\ref{OSC55} (in Section~\ref{KM:09009936523846765}).

\section{Eigenvalues and eigenfunctions and proof of Propositions~\ref{PROAUTOVA}
and~\ref{prop:lambda}}\label{AUTOPER}

The proofs of Propositions~\ref{PROAUTOVA}
and~\ref{prop:lambda} rely on classical functional analysis,
revisited
in a mixed local-nonlocal framework.
We start these arguments
by pointing out that a Poincar\'e-type inequality holds in the space~$V$
introduced in~\eqref{defV}:

\begin{lem}\label{POI66}
Let~$m$ be such that
\begin{equation}\label{yt5645867600000PRE}
\int_\Omega m(x)\,dx\neq 0.\end{equation}
Then, recalling the notation in~\eqref{seminorm}, we have that
\begin{equation}\label{poincare}
\int_\Omega u^2(x)\, dx\leq C [u]^2_{X_{\alpha,\beta}},
\end{equation} 
for every $u\in V$,
where $C>0$ depends only on~$n$, $\Omega$, $s$ and~$m$. 
\end{lem}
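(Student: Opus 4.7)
The plan is to argue by contradiction and compactness in the standard Poincaré style, adapted to the mixed functional space $X_{\alpha,\beta}$.

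Suppose no such constant $C$ exists. Then there is a sequence $\{u_k\}\subset V$ with
\[
\int_\Omega u_k^2(x)\,dx = 1 \qquad\text{and}\qquad [u_k]^2_{X_{\alpha,\beta}} \to 0.
\]
In particular $\{u_k\}$ is bounded in $X_{\alpha,\beta}$. Since $X_{\alpha,\beta}$ is a Hilbert space, up to a subsequence $u_k \rightharpoonup u_\infty$ weakly in $X_{\alpha,\beta}$. The compact embedding of $X_{\alpha,\beta}$ into $L^2(\Omega)$ (which holds by Rellich–Kondrachov when $\beta=0$, and by the fractional compactness result associated to the Neumann space $H^s_\Omega$ recalled in~\cite{MR3651008} when $\beta\ne 0$; in the mixed case $\alpha\beta\ne 0$ either inclusion suffices) produces strong convergence $u_k \to u_\infty$ in $L^2(\Omega)$. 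Hence $\|u_\infty\|_{L^2(\Omega)}=1$.

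Next I would use lower semicontinuity of the seminorm~\eqref{seminorm} with respect to weak convergence in $X_{\alpha,\beta}$ (both the Dirichlet piece and the Gagliardo piece over $\mathcal{Q}$ are convex and continuous) to deduce
\[
[u_\infty]^2_{X_{\alpha,\beta}} \leq \liminf_{k\to\infty} [u_k]^2_{X_{\alpha,\beta}} = 0.
\]
The key step is to translate $[u_\infty]_{X_{\alpha,\beta}} = 0$ into $u_\infty$ being constant. If $\alpha\ne 0$ then $\nabla u_\infty \equiv 0$ a.e.\ in $\Omega$, which by the connectedness assumption~\eqref{connected} (active precisely when $\beta=0$) forces $u_\infty$ to be a constant $c$ on $\Omega$. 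If instead $\beta\ne 0$, the vanishing of the Gagliardo-type integral over $\mathcal{Q}$ gives $u_\infty(x)=u_\infty(y)$ for a.e.\ $(x,y)\in\mathcal{Q}$; since $\mathcal{Q}\supseteq \Omega\times\Omega$ and $\mathcal{Q}\supseteq \Omega\times(\R^n\setminus\Omega)$, this already forces $u_\infty$ to coincide with a single constant $c$ a.e.\ in $\R^n$ (so in particular in $\Omega$).

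To close, pass to the limit in the constraint $\int_\Omega m(x)\,u_k(x)\,dx = 0$. Because $m\in L^q(\Omega)$ with $q>\underline{q}\ge 1$, hence $m\in L^1(\Omega)$, and $u_k\to u_\infty$ in $L^2(\Omega)\subset L^1(\Omega)$ (on the bounded set $\Omega$), we obtain $c\int_\Omega m(x)\,dx = \int_\Omega m(x)\,u_\infty(x)\,dx = 0$. Assumption~\eqref{yt5645867600000PRE} forces $c=0$, so $u_\infty\equiv 0$, contradicting $\|u_\infty\|_{L^2(\Omega)}=1$.

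The main obstacle I anticipate is the compact embedding $X_{\alpha,\beta} \hookrightarrow L^2(\Omega)$ in the purely nonlocal regime $\alpha=0$: one needs that bounded sequences in $H^s_\Omega$ are precompact in $L^2(\Omega)$ despite the Neumann-type integration domain $\mathcal{Q}$ rather than $\R^n\times\R^n$. This is available from the functional-analytic framework developed in~\cite{MR3651008}, and once it is invoked the rest of the argument is a textbook compactness–lower-semicontinuity routine.
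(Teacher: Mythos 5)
Your proof takes essentially the same contradiction-and-compactness route as the paper: normalize so that $\|u_k\|_{L^2(\Omega)}=1$ while $[u_k]_{X_{\alpha,\beta}}^2\to0$, extract a weak limit $u_\infty$, pass to strong $L^2$ convergence by compact embedding, argue that the vanishing of the seminorm forces $u_\infty$ to be constant, and then use the membership in $V$ together with $\int_\Omega m\neq0$ to force the constant to be zero, contradicting $\|u_\infty\|_{L^2}=1$. The only cosmetic difference is that you invoke weak lower semicontinuity of the full seminorm (giving $u_\infty$ constant on all of $\R^n$ when $\beta\neq0$), whereas the paper uses a.e.\ convergence plus Fatou on the $\Omega\times\Omega$ part of the Gagliardo integral only; both suffice.

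One step deserves more care. You justify passing to the limit in the constraint $\int_\Omega m\,u_k=0$ by noting $m\in L^1(\Omega)$ and $u_k\to u_\infty$ in $L^1(\Omega)$; but $L^1\times L^1$ pairing is not controlled, so this reasoning does not literally work. What is actually needed is H\"older: write
\[
\Big|\int_\Omega m\,(u_k-u_\infty)\,dx\Big|\le \|m\|_{L^q(\Omega)}\,\|u_k-u_\infty\|_{L^{q'}(\Omega)},
\]
and observe that the hypothesis $q>\underline q$ ensures $q'<2^*_s$ (resp.\ $q'<2^*$), so that the compact embedding of $X_{\alpha,\beta}$ gives $u_k\to u_\infty$ strongly in $L^{q'}(\Omega)$. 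This is precisely the role of the exponent $\underline q$, and it is also the mechanism the paper uses (via dominated convergence with an $L^{q'}$ dominating function). Fixing that line makes your argument complete.
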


\begin{proof}
We argue
by contradiction and we suppose that
there exists a sequence of functions~$u_k\in V$ such that
\begin{equation}\label{48450y95uyhjr}
\int_\Omega u_k^2(x)\,dx=1
\end{equation}
and
\begin{equation}\label{poinc1}
[u_k]^2_{X_{\alpha,\beta}}<\frac{1}{k}.
\end{equation}
In particular, the sequence $(u_k)_k$ is bounded in 
$X_{\alpha,\beta}$ uniformly in~$k$. As a consequence, from the compact
embedding of 
$X_{\alpha,\beta}$ in $L^2(\Omega)$ (see e.g. Corollary~7.2
in~\cite{MR2944369} if~$\alpha=0$), we have that,
up to a subsequence, $u_k$ converges to some function~$u\in L^2(\Omega)$
as~$k\to+\infty$. Moreover, $u_k$ converges to~$u$
a.e. in $\Omega$ as~$k\to+\infty$, and~$|u_k|\le h$ for some~$h\in L^2(\Omega)$
for every~$k\in\N$ (see e.g. Theorem~IV.9 in~\cite{MR697382}).

As a result, since $u_k\in V$, we can apply the Dominated Convergence Theorem
to conclude that
\begin{equation}\label{poinc2}
\int_\Omega m(x)u(x)\,dx=0.
\end{equation}
In addition, we deduce from~\eqref{48450y95uyhjr} that
\begin{equation}\label{48450y95uyhjrbis}
\int_\Omega u^2(x)\,dx=1.\end{equation}

On the other hand, by the Fatou Lemma, the lower semicontinuity
of the~$L^2$-norm and~\eqref{poinc1} we have that
\begin{equation}\begin{split}\label{45wejyuiniunliol}
&\frac{\alpha}2\int_{\Omega}|\nabla u|^2\,dx
+\frac\beta4\int_\Omega\int_\Omega\frac{|u(x)-u(y)|^2}{|x-y|^{n+2s}}\,dx\,dy
\\&\qquad \le \liminf_{k\to+\infty}\left(
\frac{\alpha}2\int_{\Omega}|\nabla u_k|^2\,dx+\frac{\beta}2
\int_\Omega\int_\Omega\frac{|u_k(x)-u_k(y)|^2}{|x-y|^{n+2s}}\,dx\,dy\right)
\le  \lim_{k\to+\infty}\frac1k=0.
\end{split}\end{equation}
Now, if~$\beta=0$, this says that
$$ \int_{\Omega}|\nabla u|^2\,dx=0,$$
which implies that~$u$ is constant in~$\Omega$, thanks to~\eqref{connected}.
If instead~$\beta\ne0$, we have
from~\eqref{45wejyuiniunliol} that
$$ \int_\Omega\int_\Omega\frac{|u(x)-u(y)|^2}{|x-y|^{n+2s}}\,dx\,dy=0,$$
which gives that~$u$ is constant in $\Omega$.
Hence in both case, we have that~$u$ is constant in~$\Omega$.

Moreover, we observe that~$u$ cannot vanish identically in~$\Omega$, in light
of~\eqref{48450y95uyhjrbis}. Using these observations into~\eqref{poinc2}
we conclude that
$$ \int_\Omega m(x)\,dx=0,$$
which is in contradiction with~\eqref{yt5645867600000PRE}. 
This completes the proof of
formula~\eqref{poincare}.
\end{proof}

We notice that, thanks to~\eqref{poincare}, the seminorm
in~\eqref{seminorm} is actually a norm on the space~$V$ and
it is equivalent to the norm on~$X_{\alpha,\beta}$
given by~\eqref{scalar}.
Moreover, the scalar product defined as
\begin{equation}\label{uf4t}
\langle u,v\rangle_{X_{\alpha,\beta}}:=\alpha \int_\Omega \nabla u\cdot\nabla v\,dx
+\frac\beta2 \iint_\Q\frac{(u(x)-u(y))(v(x)-v(y))}{|x-y|^{n+2s}}\,dx\,dy \end{equation}
is equivalent to the one in $X_{\alpha,\beta}$
given by~\eqref{scalar}. In this setting, we also denote
$$ \|u\|_V:=\sqrt{\langle u,v\rangle_{X_{\alpha,\beta}}}.$$

To complete the functional setting for the eigenvalue problem
in~\eqref{probauto}, we also remark that~$V$ is closed with respect to the weak
convergence:

\begin{lem}\label{lemmaclosed}
The space~$V$ introduced in~\eqref{defV} is closed with respect
to the weak convergence in~$V$.
\end{lem}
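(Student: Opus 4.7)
The plan is to exhibit $V$ as the kernel of a bounded linear functional on $X_{\alpha,\beta}$, from which weak closedness will follow automatically. Accordingly, define $T\colon X_{\alpha,\beta}\to\R$ by
$$T(u):=\int_{\Omega} m(x)\,u(x)\,dx,$$
so that, by the very definition~\eqref{defV}, $V=T^{-1}(\{0\})$. Linearity of $T$ is immediate.

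The main step is to verify that $T$ is continuous. H\"older's inequality with exponents $q$ and $q':=q/(q-1)$ yields
$$|T(u)|\le \|m\|_{L^q(\Omega)}\,\|u\|_{L^{q'}(\Omega)}.$$
The hypothesis $q>\underline{q}$ translates, case by case on~\eqref{qbar}, into $q'$ lying strictly below the relevant Sobolev critical exponent: $q'<2^*$ when $\beta=0$ and $n>2$, $q'<2^*_s$ when $\beta\neq 0$ and $n>2s$, and $q'<\infty$ in the low-dimensional regimes. In each case a continuous embedding $X_{\alpha,\beta}\hookrightarrow L^{q'}(\Omega)$ is available, whether classical, fractional, or a combination of the two (the fractional embedding is precisely the one already invoked in the proof of Lemma~\ref{POI66}). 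This yields $|T(u)|\le C\,\|u\|_{X_{\alpha,\beta}}$ with a constant $C$ depending only on $n$, $s$, $\Omega$, and~$m$.

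Once $T$ is continuous and linear, it is automatically weakly continuous, so $V=T^{-1}(\{0\})$ is weakly closed: if $u_k\in V$ converges weakly to some $u$ in $X_{\alpha,\beta}$, then $T(u)=\lim_k T(u_k)=0$, whence $u\in V$. Equivalently, $V$ is a convex, norm-closed subspace of a Hilbert space, hence weakly closed by Mazur's theorem. The only genuine checkpoint in the argument is the bookkeeping needed to confirm, in each of the regimes of~\eqref{qbar}, that $q'=q/(q-1)$ indeed sits strictly below the appropriate Sobolev exponent; this reduces to a short algebraic check and is essentially the only place a technical hitch could appear.
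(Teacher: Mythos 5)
Your argument is correct and takes a genuinely different route from the paper's. The paper proves weak sequential closedness directly: take a weakly convergent sequence $u_j\rightharpoonup u$ in $X_{\alpha,\beta}$, use the compact embedding into $L^p(\Omega)$ to upgrade to strong $L^p$ convergence, pass to an a.e.\ convergent and dominated subsequence, and then apply dominated convergence to conclude $\int_\Omega m\,u=0$. You instead identify $V$ as the kernel of the linear functional $u\mapsto\int_\Omega m\,u\,dx$, and show directly via H\"older and the continuous (not compact) Sobolev embedding that this functional is bounded; weak closedness of the kernel is then automatic. Your approach is cleaner and more economical: it avoids subsequences and dominated convergence entirely, and it makes transparent \emph{why} $V$ is weakly closed (it is a closed hyperplane). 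Note that the margin in your bookkeeping is even larger than you state: $q>\underline q$ gives $q'<(\underline q)'=2^*/2$ (resp.\ $2^*_s/2$), which is comfortably below the Sobolev exponent $2^*$ (resp.\ $2^*_s$) — the threshold $\underline q$ is calibrated for the quadratic term $\int m u^2$ appearing elsewhere in the paper, so for the linear functional here there is substantial slack. One cosmetic caveat: the symbol $T$ is already used in the paper for a different operator (the compact operator in the proof of Proposition~\ref{PROAUTOVA}), so a different letter would avoid confusion if this were to be spliced in.
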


\begin{proof}
We take a sequence of functions~$u_j\in V$ weakly converging to some~$u$,
and we claim that~$u\in V$. Indeed, we have that~$u_j$ weakly
converges to~$u$ in~$X_{\alpha,\beta}$,
and~$u\in X_{\alpha,\beta}$. Furthermore, 
by the compact embeddings  (see e.g. Corollary~7.2
in~\cite{MR2944369} if~$\alpha=0$),
$u_j \to u$ in~$L^p(\Omega)$ for any~$p\in [1,2^*_s)$ if~$\alpha=0$
and for any~$p\in [1,2^*)$ if~$\alpha\ne0$.
Moreover, $u_j$ converges to~$u$
a.e. in $\Omega$, and~$|u_j|\le h$ for some~$h\in L^p(\Omega)$
(see e.g. Theorem~IV.9 in~\cite{MR697382}).
As a result, since $u_j\in V$, recalling~\eqref{feuwtywvv123445},
we can apply the Dominated Convergence Theorem
to conclude that
\begin{equation*}
\int_\Omega m(x)u(x)\,dx=0,
\end{equation*}
which proves that~$u\in V$, thus completing the proof of Lemma~\ref{lemmaclosed}. 
\end{proof}

With this preliminary work, we can give the proofs
of Propositions~\ref{PROAUTOVA} and~\ref{prop:lambda}
by relying on functional analysis methods:

\begin{proof}[Proof of Proposition~\ref{PROAUTOVA}]
We notice that 
\begin{equation}\label{notice}
{\mbox{the simple eigenfunction $\lambda_0=0$ has only 
constant functions as eigenfunctions.}}\end{equation}
Indeed, if~$u$ is an eigenfunction
associated to~$\lambda_0=0$, then, by~\eqref{ORAGSBNmer},
\begin{equation}\label{jietyugvsde3957}
\alpha \int_\Omega \nabla u(x) \cdot\nabla v(x)\,dx 
+\frac{\beta}{2}
\iint_\Q\frac{(u(x)-u(y))(v(x)-v(y))}{|x-y|^{n+2s}}\,dx\,dy=0,
\end{equation}
for all functions~$v\in X_{\alpha,\beta}$.
In particular, taking~$u$ as test function in~\eqref{jietyugvsde3957}, 
we obtain that
\begin{equation}\label{jietyugvsde395722}
\alpha \int_\Omega |\nabla u(x)|^2\,dx 
+\frac{\beta}{2}
\iint_\Q\frac{|u(x)-u(y)|^2}{|x-y|^{n+2s}}\,dx\,dy=0.
\end{equation}
Now, if~$\beta=0$, formula~\eqref{jietyugvsde395722} implies that
$$ \int_\Omega |\nabla u(x)|^2\,dx =0.$$
This, together with~\eqref{connected}, gives that~$u$ is constant in~$\Omega$,
thus proving~\eqref{notice} in this case.

If instead~$\beta\neq0$, we deduce from~\eqref{jietyugvsde395722} that
$$ \iint_\Q\frac{|u(x)-u(y)|^2}{|x-y|^{n+2s}}\,dx\,dy=0,$$
which implies~\eqref{notice}.

Now, to obtain the other 
eigenvalues, we restrict to the space~$V$ introduced in~\eqref{defV}.
We point out that the assumption in~\eqref{yt5645867600000}
guarantees that the Poincar\`e inequality in~\eqref{poincare}
holds true on the space~$V$.

Also, we define the linear operator $T:V\to V$ by 
\begin{equation}\label{defT}
\langle Tv,w\rangle_{X_{\alpha,\beta}}=\int_\Omega m(x)v(x)w(x)\,dx,
\end{equation}
for every~$v$, $w\in V$.

It is easy to see that~$T$ is symmetric. Furthermore,
we claim that
\begin{equation}\label{compact}
{\mbox{$T$ is compact.}}\end{equation}
To prove this, we let $(u_j)_j$ be a bounded sequence in $V$. 
Then, $(u_j)_j$ is a bounded sequence in~$X_{\alpha,\beta}$, and therefore there exists~$u\in
X_{\alpha,\beta}$ such that~$u_j$ weakly converges to~$u$ in~$X_{\alpha,\beta}$
as~$j\to+\infty$. Moreover, from Lemma~\ref{lemmaclosed}, we have that~$u\in V$.

Now, by the compact embeddings,
\begin{equation}\label{poit78676}
{\mbox{$u_j \to u$ in $L^p(\Omega)$ for any~$p\in [1,2^*_s)$ if~$\alpha=0$
and for any~$p\in [1,2^*)$ if~$\alpha\ne0$.}}\end{equation}

Using \eqref{defT} with $v:=u_j-u$ and $w:=Tu_j-Tu$, 
we deduce that
\begin{equation}\label{r435fnasdaw25}
\|Tu_j-Tu\|_V^2=\langle T(u_j-u), Tu_j-T_u\rangle_{X_{\alpha,\beta}}
=\int_\Omega m(u_j-u)\big(Tu_j-Tu\big)\,dx.\end{equation}
Now we apply
H\"older's inequality with exponents~$q$, as given in~\eqref{feuwtywvv123445},
$p$, as given by~\eqref{poit78676}, and either~$2^*_s$ if~$\alpha=0$
or~$2^*$ if~$\alpha\ne0$. In this way, using also the continuous embedding
of~$V$ either in~$L^{2^*_s}(\Omega)$ if~$\alpha=0$
or~$L^{2^*}(\Omega)$ if~$\alpha\ne0$,
we obtain from~\eqref{r435fnasdaw25}
that
$$ \|Tu_j-Tu\|_V^2
\leq C\|m\|_{L^q(\Omega)}\|u_j-u\|_{L^p(\Omega)}
\|Tu_j-Tu\|_V,
$$
for some positive constant~$C$ independent of~$j$.
This implies that
\[
\|Tu_j-Tu\|_V\leq  C \|m\|_{L^q(\Omega)}\|u_j-u\|_{L^p(\Omega)}.
\]
Accordingly, recalling~\eqref{poit78676}, we obtain
that~$Tu_j\to Tu$ in $V$ as~$j\to+\infty$.
This completes the proof of~\eqref{compact}.

Now we observe that,
in light of~\eqref{ORAGSBNmer}, and recalling~\eqref{uf4t} and~\eqref{defT},
we can write the weak formulation of problem \eqref{probauto} as
\begin{equation}\label{SJNDI-32i3rtjrgnnvnbn}
\langle u,v\rangle_{X_{\alpha,\beta}} =\lambda \langle Tu,v\rangle_{X_{\alpha,\beta}}
 \quad {\mbox{ for all }} v\in X_{\alpha,\beta}.
\end{equation}
Therefore, we can apply standard results in spectral theory of 
self-adjoint and compact operators to obtain the existence and the 
variational characterization of eigenvalues (see 
e.g.~\cite[Propo\-si\-tion 1.10]{defi}; see also~\cite{MR576277}
and the references therein for related classical results).
\end{proof}

\begin{proof}[Proof of Proposition~\ref{prop:lambda}]
We first observe that if~$\beta\ne0$ and~$w$ is an eigenfunction
according to~\eqref{probauto}, then
\begin{equation}\label{ENOUGH}
{\mbox{$w\equiv0$ in~$\Omega$ entails that~$w\equiv0$
in the whole of~$\R^n$.}}
\end{equation}
To check this, suppose that~$w\equiv0$ in~$\Omega$
and write~\eqref{probauto}
explicitly as in~\eqref{ORAGSBNmer}, namely
\begin{equation}\label{WEAKSOL-mla}
\begin{split}&
\alpha \int_\Omega \nabla w(x) \cdot\nabla v(x)\,dx 
+\frac{\beta}{2}
\iint_\Q\frac{(w(x)-w(y))(v(x)-v(y))}{|x-y|^{n+2s}}\,dx\,dy
\\&\qquad=\lambda
\int_\Omega m(x)\,w(x)\,v(x)\,dx
\end{split}
\end{equation}
for all functions~$v\in X_{\alpha,\beta}$.
In particular, choosing~$v:=w$ in~\eqref{WEAKSOL-mla},
$$ 0=
\frac{\beta}{2}
\iint_\Q\frac{(w(x)-w(y))^2}{|x-y|^{n+2s}}\,dx\,dy=
\beta
\iint_{\Omega\times(\R^n\setminus\Omega)}
\frac{w^2(y)}{|x-y|^{n+2s}}\,dx\,dy.$$
Whence, if~$\beta\ne0$, it follows that~$w(y)=0$ for each~$y\in\Omega$,
thus establishing~\eqref{ENOUGH}.

Now, we prove that
\begin{equation}\label{posi22}
{\mbox{all the eigenfunctions corresponding to~$\lambda_1$ do not change
sign.}}
\end{equation}
For this, we let~$u$ be an eigenfunction corresponding to
the first positive eigenvalue $\lambda_1$.
In particular, recalling~\eqref{lopouygbv},
we have that~$u\in X_{\alpha,\beta}$ and
\begin{equation}\label{forse}
\int_{\Omega } m(x)u^2(x)\,dx=1.\end{equation}
If~$u$ is either nonnegative or nonpositive, then~\eqref{posi22} is established.
Hence, we are left with the case in which~$u$ changes sign in~$\Omega$.
In this case, we have that both~$u^+\not\equiv0$
and~$u^-\not\equiv 0$, and we claim that
\begin{equation}\label{posi33}
{\mbox{both~$u^+$ and $u^-$ are
eigenfunctions corresponding to~$\lambda_1$.}}
\end{equation}
To this end,
we notice that
\begin{equation}\label{49vbhgjhb}
\int_{\Omega} u^2(x)\,dx =\int_\Omega (u^+(x))^2\,dx + \int_\Omega (u^-(x))^2\,dx.
\end{equation}
Moreover, recalling~\eqref{seminorm}, by inspection one sees that
\begin{equation}\begin{split}\label{iehtierhgg}&
[u]_{X_{\alpha,\beta}}^2\\=\;& \alpha \int_\Omega |\nabla u|^2\,dx
+\beta \iint_\Q\frac{|u(x)-u(y)|^2}{|x-y|^{n+2s}}\,dx\,dy\\
=\;&\alpha \int_\Omega \left(|\nabla u^+|^2+|\nabla u^-|^2\right)\,dx
+\frac\beta2 \iint_\Q\frac{|u^+(x)-u^+(y)|^2}{|x-y|^{n+2s}}\,dx\,dy\\
&\qquad + \frac\beta2 \iint_\Q\frac{|u^-(x)-u^-(y)|^2}{|x-y|^{n+2s}}\,dx\,dy
- \beta \iint_\Q\frac{(u^+(x)-u^+(y))(u^-(x)-u^-(y))}{|x-y|^{n+2s}}\,dx\,dy
\\ \geq\;&[u^+]_{X_{\alpha,\beta}}^2+[u^-]_{X_{\alpha,\beta}}^2.
\end{split}\end{equation}
This and~\eqref{49vbhgjhb} imply that~$u^+$, $u^-\in X_{\alpha,\beta}$.

Also, in light of~\eqref{forse}, we have that
\[ 1=
\int_\Omega m(x)u^2(x)\,dx=\int_\Omega m(x)(u^+(x))^2\,dx
+\int_\Omega m(x)(u^-(x))^2\,dx.
\]
Hence, using this and~\eqref{iehtierhgg}, and
recalling the characterization of~$\lambda_1$ given in~\eqref{lopouygbv}, 
\begin{equation}\label{disug+-}
\frac{1}{\lambda_1}=\frac{1}{[u]_{X_{\alpha,\beta}}^2}=
\frac{\displaystyle\int_\Omega m(x)u^2(x)\,dx}{[u]_{X_{\alpha,\beta}}^2}
\leq \frac{\displaystyle\int_\Omega m(x)(u^+(x))^2\,dx
+\int_\Omega m(x)(u^-(x))^2\,dx}
{[u^+]_{X_{\alpha,\beta}}^2+[u^-]_{X_{\alpha,\beta}}^2}.
\end{equation}

Now we claim that, for any $a_1$, $a_2$, $b_1$, $b_2>0$, either
\begin{equation}\label{ab1}
\frac{a_1+a_2}{b_1+b_2}=\frac{a_1}{b_1}=\frac{a_2}{b_2},
\end{equation}
or
\begin{equation}\label{ab2}
\frac{a_1+a_2}{b_1+b_2}<\max\left\lbrace\frac{a_1}{b_1},
\frac{a_2}{b_2}\right\rbrace.
\end{equation}
Indeed, if $\frac{a_1}{b_1}=\frac{a_2}{b_2}$, then 
$$ \frac{a_1+a_2}{b_1+b_2}=\frac{a_2}{b_2}\cdot \frac{\frac{a_1}{a_2}+1}{\frac{b_1}{b_2}+1}=
\frac{a_2}{b_2}\cdot\frac{\frac{a_1}{a_2}+1}{\frac{a_1}{a_2}+1}=\frac{a_2}{b_2},$$
that is~\eqref{ab1}. If instead we suppose that 
$\frac{a_1}{b_1}>\frac{a_2}{b_2}$ (being the case in which $\frac{a_1}{b_1}<\frac{a_2}{b_2}$
similar), then
\[
\frac{a_1+a_2}{b_1+b_2}=\frac{b_1(a_1+a_2)}{b_1(b_1+b_2)}
<\frac{a_1b_1+a_1b_2}{b_1(b_1+b_2)}=\frac{a_1(b_1+b_2)}{b_1(b_1+b_2)}=
\frac{a_1}{b_1},
\]
which proves~\eqref{ab2}.

Now, if we suppose that
$$  \frac{\displaystyle\int_\Omega m(x)(u^+(x))^2\,dx}{[u^+]_{X_{\alpha,\beta}}^2}>
\frac{\displaystyle\int_\Omega m(x)(u^-(x))^2\,dx}{[u^-]_{X_{\alpha,\beta}}^2}$$ 
then we deduce from~\eqref{disug+-} and~\eqref{ab2},
applied here with
\begin{eqnarray*}
&& a_1:= \int_\Omega m(x)(u^+(x))^2\,dx, \quad 
a_2:=\int_\Omega m(x)(u^-(x))^2\,dx, \\&& b_1:=[u^+]_{X_{\alpha,\beta}}^2,\quad
{\mbox{ and }}\quad b_2:=[u^-]_{X_{\alpha,\beta}}^2,\end{eqnarray*}
that
$$\frac{1}{\lambda_1} < \frac{\displaystyle\int_\Omega m(x)(u^+(x))^2\,dx}{[u^+]_{X_{\alpha,\beta}}^2},$$
which contradicts the minimality of~$\lambda_1$.
Similarly, if
$$  \frac{\displaystyle\int_\Omega m(x)(u^+(x))^2\,dx}{[u^+]_{X_{\alpha,\beta}}^2}<
\frac{\displaystyle\int_\Omega m(x)(u^-(x))^2\,dx}{[u^-]_{X_{\alpha,\beta}}^2},$$ then
$$\frac{1}{\lambda_1} < \frac{\displaystyle\int_\Omega m(x)(u^-(x))^2\,dx}{[u^-]_{X_{\alpha,\beta}}^2},$$ 
which is again a contradiction with the minimality of~$\lambda_1$.

As a consequence, we have that
$$ \frac{\displaystyle\int_\Omega m(x)(u^+(x))^2\,dx}{[u^+]_{X_{\alpha,\beta}}^2}=
\frac{\displaystyle\int_\Omega m(x)(u^-(x))^2\,dx}{[u^-]_{X_{\alpha,\beta}}^2}.$$
In this case, we can apply~\eqref{ab1} and we obtain from~\eqref{disug+-} that
$$\frac{1}{\lambda_1} \le \frac{\displaystyle\int_\Omega m(x)(u^+(x))^2\,dx}{[u^+]_{X_{\alpha,\beta}}^2}
=\frac{\displaystyle\int_\Omega m(x)(u^-(x))^2\,dx}{[u^-]_{X_{\alpha,\beta}}^2},$$
that is
\begin{equation}\label{-678686uhi}
\lambda_1\ge  \frac{[u^+]_{X_{\alpha,\beta}}^2}{\displaystyle\int_\Omega m(x)(u^+(x))^2\,dx}
=\frac{[u^-]_{X_{\alpha,\beta}}^2}{\displaystyle\int_\Omega m(x)(u^-(x))^2\,dx}.
\end{equation}
Now, if the inequality in~\eqref{-678686uhi}
is strict, we have a contradiction 
with the minimality of~$\lambda_1$. Accordingly,
$$
\lambda_1=  \frac{[u^+]_{X_{\alpha,\beta}}^2}{\displaystyle\int_\Omega m(x)(u^+(x))^2\,dx}
=\frac{[u^-]_{X_{\alpha,\beta}}^2}{\displaystyle\int_\Omega m(x)(u^-(x))^2\,dx}.
$$
This implies that~$u^+$ and $u^-$ are both
eigenfunctions corresponding to~$\lambda_1$ (unless they are trivial)
thus establishing~\eqref{posi33}.

Our next claim is to prove that 
\begin{equation}\label{posi44}
{\mbox{either~$u\equiv u^+$ or~$u\equiv u^-$.}}\end{equation}
We observe that, if~$\beta=0$, then~\eqref{posi44} follows
from the standard maximum principle for the Laplace operator
(see e.g.~\cite{MR2597943}).

If instead~$\beta\ne0$,
we use~\eqref{posi33} and~\eqref{disug+-}
to see that
\begin{equation*}
\frac{1}{\lambda_1}
\leq \frac{\displaystyle\int_\Omega m(x)(u^+(x))^2\,dx
+\int_\Omega m(x)(u^-(x))^2\,dx}
{[u^+]_{X_{\alpha,\beta}}^2+[u^-]_{X_{\alpha,\beta}}^2}
=\frac{1}{\lambda_1}.
\end{equation*}
In particular, equality holds in the latter formula, and accordingly,
recalling~\eqref{iehtierhgg}, we have that
$$ 0=-\iint_\Q\frac{(u^+(x)-u^+(y))(u^-(x)-u^-(y))}{|x-y|^{n+2s}}\,dx\,dy
=\iint_\Q\frac{2u^+(x)u^-(y)}{|x-y|^{n+2s}}\,dx\,dy.
$$
This gives that
\begin{equation}\label{setuerghdfjbv}
u^+(x)u^-(y) =0\qquad {\mbox{ for all }} (x,y)\in\Q.
\end{equation}
We can also suppose that~$u^+\not\equiv0$ (in~$\R^n$ if~$\beta\ne0$
and in~$\Omega$ if~$\beta=0$),
otherwise~$u\equiv u^-$ and we are done. 
This and~\eqref{ENOUGH} give that~$u^+\not\equiv0$ in~$\Omega$.
Hence, we can take~$\bar{x}\in\Omega$ such that~$u^+(\bar{x})\ne0$.
{F}rom this and~\eqref{setuerghdfjbv}, we obtain that
\begin{equation*}
u^+(\bar{x})u^-(y) =0\qquad {\mbox{ for all }} y\in\R^n.
\end{equation*} 
As a consequence, we find that~$u^-\equiv0$ in~$\R^n$,
which establishes~\eqref{posi44}.

In turn, the claim in~\eqref{posi44} implies
the one in~\eqref{posi22}, as desired.

We now prove that~$\lambda_1$ is simple.
First we show that
\begin{equation}\label{GMP}
{\mbox{the geometric multiplicity of 
$\lambda_1$ is 1.}}\end{equation} For this, let $u_1$ and $u_2$ be eigenfunctions 
corresponding to $\lambda_1$. {F}rom~\eqref{posi22} we know that~$u_2$
does not change sign, hence (up to exchanging $u_2$ with~$-u_2$),
we can suppose that~$u_2\ge0$ (in~$\R^n$, if~$\beta\ne0$,
and in~$\Omega$, if~$\beta=0$).

{F}rom this and~\eqref{ENOUGH}, it follows that
$$ \int_\Omega u_2(x)\,dx>0.$$
As a result, we can define
$$ a:=\frac{\displaystyle\int_\Omega u_1(x)\,dx}{
\displaystyle\int_\Omega u_2(x)\,dx},$$
and we find that
\begin{equation}\label{09qwr8hrhtg} \int_\Omega \big(u_1(x)-au_2(x)\big)\,dx=0.\end{equation}
In addition, from~\eqref{posi22}, we know that the eigenfunction~$u_1-au_2$
does not change sign, and therefore~\eqref{09qwr8hrhtg}
entails that~$u_1-au_2\equiv0$ in~$\Omega$.
This and~\eqref{ENOUGH} show that~$u_1-au_2\equiv0$
also in~$\R^n$ when~$\beta\ne0$, and this proves that~$u_1$
and $u_2$ are linearly dependent, giving~\eqref{GMP},
as desired.

Finally, we prove that 
\begin{equation}\label{AGM}
{\mbox{the algebraic multiplicity of 
$\lambda_1$ is 1.}}\end{equation}
To this end, we recall the notation in~\eqref{defV} and~\eqref{defT},
and we claim that 
\begin{equation}\label{TKEP}
{\rm Ker}\big( (I-\lambda_1 T)^2\big)={\rm Ker}(I-\lambda_1 T),\end{equation}
where~$I$ is the identity in~$V$.

To prove~\eqref{TKEP},
let $u\in {\rm Ker}\big((I-\lambda_1 T)^2\big)$. Then,
setting $U:=u-\lambda_1 Tu$,
we have that~$U-\lambda_1 TU=0$, and accordingly,
by~\eqref{SJNDI-32i3rtjrgnnvnbn}, $U$ is an eigenfunction
corresponding to~$\lambda_1$.

{F}rom this fact and~\eqref{GMP},
we conclude that~$U=te_1$ for
some $t\in \R$, where~$e_1$ is a given
eigenfunction corresponding to $\lambda_1$.

As a result,
\[ t\langle e_1,e_1\rangle_{X_{\alpha,\beta}}=\langle U,e_1\rangle_{X_{\alpha,\beta}}=
\langle u-\lambda_1 Tu,e_1\rangle_{X_{\alpha,\beta}}
= \langle u,e_1-\lambda_1 Te_1\rangle_{X_{\alpha,\beta}}= \langle u,0\rangle_{X_{\alpha,\beta}}=0,
\]
which implies that~$t=0$. 
This yields that~$U=0$ and therefore~$u\in
 {\rm Ker}(I-\lambda_1 T)$. This shows that~$
{\rm Ker}\big( (I-\lambda_1 T)^2\big)\subseteq{\rm Ker}(I-\lambda_1 T)$,
and the other inclusion is obvious.

The proof of~\eqref{TKEP} is therefore complete.
{F}rom~\eqref{TKEP}, we obtain that for all~$k\in\N$ with~$k\ge1$,
$$ 
{\rm Ker}\big( (I-\lambda_1 T)^k\big)=
{\rm Ker}(I-\lambda_1 T),$$
and thus
$$ \bigcup_{k=1}^{+\infty}
{\rm Ker}\big( (I-\lambda_1 T)^k\big)=
{\rm Ker}(I-\lambda_1 T).$$
The latter has dimension~1, thanks to~\eqref{GMP},
and therefore the claim in~\eqref{AGM} is established.
\end{proof}

\section{Boundedness of weak subsolutions and proof
of Theorem~\ref{OSC55}}\label{KM:09009936523846765}

For the proof of Theorem~\ref{OSC55},
we give here a general Sobolev inequality for the functions in the space~$V$
introduced in~\eqref{defV}
which can be seen as a natural counterpart of the Poincar\'e
inequality given in
Lemma~\ref{POI66} (the proof is somewhat of classical flavor,
but we provide full details for the sake of completeness):

\begin{lem}\label{NEWSOB}
Let~$m$ be such that
\begin{equation*}
\int_{\Omega} m(x)\,dx\neq0.
\end{equation*}
Let~$\eta$ be the fractional Sobolev
exponent~$2^*_s:=\frac{2n}{n-2s}$ if~$\beta\ne0$ and~$n>2s$, the
classical Sobolev exponent~$2^*:=\frac{2n}{n-2}$ if~$\beta=0$ and~$n>2$
and~$\eta\ge1$
arbitrary in the other cases.

If~$V$ is as in \eqref{defV} and~$u\in V$, then
\begin{equation}\label{D-SON}
\int_{\Omega} u^\eta(x)\,dx\le C\,
\left( \alpha \,\int_{\Omega} |\nabla u(x)|^2\,dx
+\frac{\beta}{2}\,\iint_{{\mathcal{Q}}}\frac{(u(x)-u(y))^2}{|x-y|^{{{n}}+2s}}\,dx\,dy
\right)^{\frac\eta2},
\end{equation}
where~$C>0$ depends only on~$n$, $\Omega$, $s$ and~$m$.
\end{lem}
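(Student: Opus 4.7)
The plan is to combine a standard Sobolev-type embedding with the Poincar\'e-type inequality of Lemma~\ref{POI66}. The embedding controls the $L^\eta$-norm of $u$ by the full norm on~$X_{\alpha,\beta}$, which in particular includes an $L^2$-contribution; being on the subspace~$V$, we may then use the Poincar\'e inequality to absorb that $L^2$-contribution into the seminorm~$[u]_{X_{\alpha,\beta}}$, which is exactly what appears on the right-hand side of~\eqref{D-SON}.

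First I would recall the embedding step. When~$\beta=0$, the classical Sobolev embedding of~$H^1(\Omega)$ into~$L^{2^*}(\Omega)$ (for~$n>2$), complemented by the embedding into~$L^\eta(\Omega)$ for any $\eta\ge1$ when~$n\le 2$, yields
\begin{equation*}
\|u\|_{L^\eta(\Omega)}^2 \leq C \left( \|u\|_{L^2(\Omega)}^2 + \int_{\Omega}|\nabla u(x)|^2\,dx \right).
\end{equation*}
When~$\alpha=0$, the fractional Sobolev embedding of~$H^s_\Omega$ into~$L^{2^*_s}(\Omega)$, as in Corollary~7.2 of~\cite{MR2944369}, gives
\begin{equation*}
\|u\|_{L^\eta(\Omega)}^2 \leq C \left( \|u\|_{L^2(\Omega)}^2 + \iint_\Q \frac{(u(x)-u(y))^2}{|x-y|^{n+2s}}\,dx\,dy \right).
\end{equation*}
When~$\alpha\beta\neq 0$, either embedding is available and one selects the more restrictive exponent. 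In all cases we obtain
\begin{equation*}
\|u\|_{L^\eta(\Omega)}^2 \leq C \left( \|u\|_{L^2(\Omega)}^2 + \alpha\int_{\Omega}|\nabla u(x)|^2\,dx + \frac{\beta}{2}\iint_\Q\frac{(u(x)-u(y))^2}{|x-y|^{n+2s}}\,dx\,dy \right).
\end{equation*}

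Next, since~$u\in V$ and the hypothesis $\int_\Omega m\neq 0$ is in force, I would invoke Lemma~\ref{POI66} to replace the $\|u\|_{L^2(\Omega)}^2$ term on the right-hand side by a constant multiple of~$[u]^2_{X_{\alpha,\beta}}$. Merging this with the gradient and Gagliardo terms already present leads to
\begin{equation*}
\|u\|_{L^\eta(\Omega)}^2 \leq C' \left( \alpha\int_{\Omega}|\nabla u(x)|^2\,dx + \frac{\beta}{2}\iint_\Q\frac{(u(x)-u(y))^2}{|x-y|^{n+2s}}\,dx\,dy \right),
\end{equation*}
and raising both sides to the power~$\eta/2$ (and adjusting the constant) produces~\eqref{D-SON}.

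The only delicate point is the correct formulation of the fractional Sobolev embedding for the space~$H^s_\Omega$, whose Gagliardo-type seminorm integrates over the cross-shaped set~$\Q$ rather than over $\R^n\times\R^n$ or $\Omega\times\Omega$. One has to verify that the interactions recorded by~$\Q$—those inside~$\Omega$ together with those across~$\partial\Omega$—suffice to control~$\|u\|_{L^{2^*_s}(\Omega)}$; this is built into the functional framework set up in~\cite{MR3651008} together with the embeddings proved in~\cite{MR2944369}, so it can be cited rather than re-derived. The remainder of the argument is entirely routine.
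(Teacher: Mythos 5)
Your proposal is correct and follows essentially the same route as the paper: combine a (classical or fractional) Sobolev embedding with the Poincar\'e inequality of Lemma~\ref{POI66} on~$V$ to eliminate the $L^2$-term, then raise to the power~$\eta/2$; the paper merely spells out the low-dimensional cases ($n\le 2$, resp.\ $n\le 2s$) via interpolation and Morrey. The point you flag as delicate is in fact immediate, since the Gagliardo integral over~$\mathcal{Q}$ dominates the one over~$\Omega\times\Omega$, so the usual embedding $H^s(\Omega)\hookrightarrow L^{2^*_s}(\Omega)$ (Theorem~6.7 in~\cite{MR2944369}, rather than the compactness statement of Corollary~7.2) applies directly.
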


\begin{proof} As usual, in this proof we will freely
rename~$C>0$ line after line. First of all, we observe
that the following ``generalized'' Sobolev inequality for
any function~$f\in X_{\alpha,\beta}$ holds true:
\begin{equation}\label{GENESOB-1}
\| f\|_{L^{{\eta_1}}(\Omega)}\le C\,\|f\|_{H^1(\Omega)},
\end{equation}
where~${\eta_1}:=2^*$ if~$n>2$, and~${\eta_1}\ge1$
arbitrary if~$n\le2$. Indeed, when~$n>2$, the claim in~\eqref{GENESOB-1}
is the standard Sobolev embedding (see e.g. Theorem~2
on page~279 of~\cite{MR2597943}).
If instead~$n=2$, we let~$\sigma:=\frac{{\eta_1}}{{\eta_1}+1}\in(0,1)$.
By Proposition~2.2 in~\cite{MR2944369}, we know that
\begin{equation}\label{bfSKDcnv}
\|f\|_{H^\sigma(\Omega)}
\le C\,\|f\|_{H^1(\Omega)}.\end{equation}
Also, we have that~$2\sigma <2=n$ and
$$ 2_\sigma^*=\frac{2n}{n-2\sigma}=\frac{2}{1-\sigma}=
2({\eta_1}+1)\ge{\eta_1}.
$$
Hence, by Theorem~6.7 in~\cite{MR2944369},
we obtain that~$\|f\|_{L^{{\eta_1}}(\Omega)}\le C\|f\|_{H^\sigma(\Omega)}$.
{F}rom this and~\eqref{bfSKDcnv}, we obtain~\eqref{GENESOB-1}
in this case.

Finally, when~$n=1$, we have that~\eqref{GENESOB-1}
is a consequence of Morrey embedding (see e.g. Theorem~5
on page~283 of~\cite{MR2597943}). These considerations
complete the proof of~\eqref{GENESOB-1}.

As a fractional counterpart of~\eqref{GENESOB-1}, we notice that
\begin{equation}\label{GENESOB-2}
\| f\|_{L^{{\eta_s}}(\Omega)}\le C\,\|f\|_{H^s(\Omega)},
\end{equation}
where~${\eta_s}:=2^*_s$ if~$n>2s$, and~${\eta_s}\ge1$
arbitrary if~$n\le2s$.
Indeed, when~$n>2s$, we can use Theorem~6.7
in~\cite{MR2944369} and obtain~\eqref{GENESOB-2}.
If instead~$n\le2s$, the claim in~\eqref{GENESOB-2}
is contained in
Theorem~6.10 of~\cite{MR2944369}.

Now we take~$\eta$ as in the statement of
Lemma~\ref{NEWSOB}
and we claim that
\begin{equation}\label{SMDC}
\| f\|_{L^{{\eta}}(\Omega)}\le C\,\big(
\alpha\|f\|_{H^1(\Omega)}+\beta\|f\|_{H^s(\Omega)}\big).
\end{equation}
Indeed,
if~$\beta\ne0$, 
the claim in~\eqref{SMDC} follows from~\eqref{GENESOB-2}.
If instead~$\beta=0$, then necessarily~$\alpha>0$
and thus
the claim in~\eqref{SMDC} is a consequence of~\eqref{GENESOB-1}.

Having proved~\eqref{SMDC}, we can now combine it
with the Poincar\'e inequality in Lemma~\ref{POI66}
in order to complete the proof of~\eqref{D-SON}.
To this end, since~$u\in V$,
Lemma~\ref{POI66} gives that
\begin{equation} \label{JOSDHKFBzv98348ty03rug1}\| u\|_{L^2(\Omega)}\le C\,[u]_{X_{\alpha,\beta}}=
C\,\sqrt{
\frac\alpha2 \,\int_{\Omega} |\nabla u(x)|^2\,dx
+\frac{\beta}{4}\,\iint_{{\mathcal{Q}}}\frac{(u(x)-u(y))^2}{|x-y|^{{{n}}+2s}}\,dx\,dy
}.\end{equation}
Moreover, by~\eqref{SMDC},
\begin{equation}\label{JOSDHKFBzv98348ty03rug}
\begin{split}
\| u\|_{L^{{\eta}}(\Omega)}\,&\le C\,\big(
\alpha\|u\|_{H^1(\Omega)}+\beta\|u\|_{H^s(\Omega)}\big)\\
&\le C\,\sqrt{
\frac\alpha2 \,\int_{\Omega} |\nabla u(x)|^2\,dx
+\frac{\beta}{4}\,\iint_{{\mathcal{Q}}}\frac{(u(x)-u(y))^2}{|x-y|^{{{n}}+2s}}\,dx\,dy
}+C\,\|u\|_{L^2(\Omega)}.
\end{split}\end{equation}
Then, we insert~\eqref{JOSDHKFBzv98348ty03rug1}
into~\eqref{JOSDHKFBzv98348ty03rug}, and we obtain~\eqref{D-SON},
as desired.
\end{proof}

Now, we dive into the details of the proof
of Theorem~\ref{OSC55}, which is based
on a suitable choice of test functions
and an iteration argument.

\begin{proof}[Proof of Theorem~\ref{OSC55}] We combine for this proof some
classical and nonlocal techniques, see e.g.~\cite{MR1669352, MR1911531, MR3060890, MR3161511, MR3237774, MR3542614, MR3593528}.
Differently from the previous
literature, we focus here on the case of
the $(\alpha,\beta)$-Neumann conditions.
For the facility of the reader, we try to make our arguments
as self-contained as possible.

Given~$k\ge0$, we let~$v:=(u-k)^+$.
We claim that
\begin{equation}\label{MASd-sd}
(u(x)-u(y))( v(x)-v(y))\ge(v(x)-v(y))^2.
\end{equation}
To prove this, we can suppose that~$u(x)\ge u(y)$, up to exchanging the roles of~$x$ and~$y$.
Also, if both~$u(x)$ and~$u(y)$ are larger than~$k$,
we have that~$v(x)=u(x)-k$ and~$v(y)=u(y)-k$, and thus~\eqref{MASd-sd}
follows in this case (in fact, with equality instead of inequality).
Therefore, we can suppose that~$u(x)\ge k\ge u(y)$,
whence~$v(x)=u(x)-k$ and~$v(y)=0$,
and then
\begin{eqnarray*}&&
(u(x)-u(y))( v(x)-v(y))-(v(x)-v(y))^2
= (u(x)-u(y))( u(x)-k)-(u(x)-k)^2\\&&\qquad
= \big( (u(x)-u(y))-(u(x)-k)\big)( u(x)-k)=(k-u(y))(u(x)-k)\ge0.
\end{eqnarray*}
This establishes~\eqref{MASd-sd}.

By~\eqref{MASd-sd},
\begin{equation}\label{19gtasgbcsd-2}
\iint_{{\mathcal{Q}}}\frac{(u(x)-u(y))({ v }(x)-{ v }(y))}{|x-y|^{{{n}}+2s}}\,dx\,dy\ge
\iint_{{\mathcal{Q}}}\frac{(v(x)-v(y))^2}{|x-y|^{{{n}}+2s}}\,dx\,dy.\end{equation}
In addition,
\begin{eqnarray*}
\int_{\Omega} \nabla u(x)\cdot\nabla{ v }(x)\,dx=
\int_{\Omega} |\nabla v(x)|^2\,dx.
\end{eqnarray*}
Consequently, by~\eqref{WEAK-DGSUBSOL},
\begin{equation}\label{0980980987654tgb}
\begin{split}
{\mathcal{I}}
\; :=\;&
\alpha \,\int_{\Omega} |\nabla v(x)|^2\,dx
+\frac{\beta}{2}\,\iint_{{\mathcal{Q}}}\frac{(v(x)-v(y))^2}{|x-y|^{{{n}}+2s}}\,dx\,dy
\\ \le\;&
\int_{\Omega}\big(c(x)u(x)+ f(x)\big)\,{ v }(x)\,dx\\
\le\;&
\int_{\Omega}\Big( |c(x)|\,|u(x)|\,v(x)+ |f(x)|\,v(x)\Big)\,dx.
\end{split}
\end{equation} 
We also remark that
\begin{equation}\label{876janscvd945t}
|u(x)|\,v(x)\le 4(v^2(x)+k^2).
\end{equation}
Indeed, if~$u(x)\le k$, then~$v(x)=0$ and~\eqref{876janscvd945t}
plainly follows. If instead~$u(x)>k$, then~$v(x)=u(x)-k$,
and consequently
\begin{eqnarray*}&& |u(x)|\,v(x)- 4v^2(x)-4k^2=u(x)\,v(x)- 4v^2(x)-4k^2=
(v(x)+k)\,v(x)- 4v^2(x)-4k^2\\&&\qquad=kv(x)-3v^2(x)-4k^2
\le0,\end{eqnarray*}
thus establishing~\eqref{876janscvd945t}.

{F}rom~\eqref{0980980987654tgb} and~\eqref{876janscvd945t},
we conclude that
\begin{equation}\label{0980980987654tgb-22}
{\mathcal{I}}
\le C\,
\int_{\Omega\cap\{v\ne0\}} \Big(
|c(x)|\,v^2(x)+ k^2|c(x)|+
|f(x)|\,v(x)\Big)\,dx ,
\end{equation} 
up to renaming~$C>1$.

Now, we denote by~${\mathcal{Z}}$
the Lebesgue measure of the set~$\Omega\cap\{v\ne0\}=\Omega\cap\{u>k\}$
and we let~$\eta$ be as in the statement of Lemma~\ref{NEWSOB},
with the additional requirement that~$\eta>\frac{2q}{q-1}$ 
if~$\beta\ne0$ and~$n\le2s$, and if~$\beta=0$ and~$n\le2$
(these situations corresponding to ``the other cases''
mentioned in the statement of Lemma~\ref{NEWSOB}).

We claim that
\begin{equation} \label{H66OAK}\frac1{q}+\frac{1}{\eta} < 1.
\end{equation}
Indeed, we use here~\eqref{qbar} and we see that, if~$\beta=0$ and~$n>2$,
$$ \frac1{q}+\frac{1}{\eta}<
\frac1{\underline{q}}+\frac{n-2}{2n}=\frac{2}{n}+\frac{n-2}{2n}
=\frac{n+2}{2n}<1.$$
If instead~$\beta\ne0$ and~$n>2s$,
$$ \frac1{q}+\frac{1}{\eta}<
\frac1{\underline{q}}+\frac{n-2s}{2n}=\frac{2s}{n}+\frac{n-2s}{2n}=\frac{n+2s}{2n}
<1.$$
In all the other cases,
$$ \frac1{q}+\frac{1}{\eta}<
\frac1{q}+\frac{q-1}{q}
=1.$$
These observations prove~\eqref{H66OAK}.

Now, from~\eqref{H66OAK},
we can define
\begin{equation}\label{ETATT}\eta':=\frac{1}{1-\displaystyle\frac1q-\frac1\eta}
\end{equation}
and we can
exploit the H\"older inequality with exponents~$q$
and~$\eta$ and~$\eta'$,  thus finding that
\begin{equation}\label{09oqdwfkkk89PS}
\int_{\Omega} |f(x)|\,v(x)\,dx\le
\|f\|_{L^q(\Omega)}\left(
\int_{\R^{{n}}} (v(x))^\eta
\right)^{\frac1\eta}\,{\mathcal{Z}}^{\frac1{\eta'}}
\end{equation}
We fix now~$\delta\in(0,1)$, to be taken conveniently small in what
follows, 
and we claim that
\begin{equation}\label{KS-dfp}
\int_{\Omega} |f(x)|\,v(x)\,dx\le\delta{\mathcal{I}}
+C_\delta\,
\|f\|_{L^q(\Omega)}^2\,{\mathcal{Z}}^{\vartheta},
\end{equation}
with (recalling~\eqref{ETATT})
\begin{equation}\label{KS-dfp-0983w4}
\vartheta:=\frac2{\eta'}=2\left(1-\frac1q-\frac1\eta\right),\end{equation}
for a suitable~$C_\delta>1$. 

Indeed, 
using~\eqref{09oqdwfkkk89PS} and Lemma~\ref{NEWSOB},
\begin{eqnarray*}
\int_{\Omega} |f(x)|\,v(x)\,dx&\le&C\,
\|f\|_{L^q(\Omega)}\,\sqrt{\mathcal{I}}\,{\mathcal{Z}}^{\frac1{\eta'}}\\
&\le&\delta\,{\mathcal{I}}+C_\delta\,\Big(
\|f\|_{L^q(\Omega)}\,{\mathcal{Z}}^{\frac1{\eta'}}\Big)^2,
\end{eqnarray*}
which gives~\eqref{KS-dfp}.

Then, combining~\eqref{0980980987654tgb-22}
and~\eqref{KS-dfp}, we find that
\begin{equation*}
\begin{split}&
{\mathcal{I}}
\le C\,
\int_{\Omega \cap\{v\ne0\}} \Big(
|c(x)|\,v^2(x)+ k^2|c(x)|\Big)\,dx +
C\delta\,{\mathcal{I}}+C_\delta\,
\|f\|_{L^q(\Omega)}^2\,{\mathcal{Z}}^{\vartheta}\end{split}\end{equation*} 
up to renaming constants.

Consequently, choosing~$\delta$ sufficiently small
(and considering~$\delta$ fixed from now on), we obtain
\begin{equation}\label{CAC-PIV}
\begin{split}&
{\mathcal{I}}
\le C\,
\int_{\Omega\cap\{v\ne0\}} \Big(
|c(x)|\,v^2(x)+ k^2|c(x)|\Big)\,dx +C\,
\|f\|_{L^q(\Omega)}^2{\mathcal{Z}}^{\vartheta}
\end{split}\end{equation} 
up to renaming constants.

In this setting, formula~\eqref{CAC-PIV}
will play a role of a pivotal Caccioppoli-type inequality, according to
the following argument. We claim that there exists~$c_\star>0$
such that if~${\mathcal{Z}}<c_\star$ then
\begin{equation}\label{CAC-PIV-2}
\begin{split}&
{\mathcal{I}}
\le C\,\big(k^2+
\|f\|_{L^q(\Omega)}^2\big)\,{\mathcal{Z}}^{1-\frac1q}
.\end{split}\end{equation} 
To check this, we recall~\eqref{KS-dfp-0983w4},
and we use the H\"older inequality and Lemma~\ref{NEWSOB}
to see that
\begin{eqnarray*}
&&\int_{\Omega} 
|c(x)|\,v^2(x)\,dx
\le\|c\|_{L^q(\Omega)}\,\| v\|_{L^{\eta}(\Omega)}^{2} \;{\mathcal{Z}}^{
1-\frac1q-\frac2\eta}\\&&\qquad\qquad=
\|c\|_{L^q(\Omega)}\,\| v\|_{L^{\eta}(\Omega)}^{2} \;
{\mathcal{Z}}^{\frac\vartheta2-\frac1\eta}\le C\,{\mathcal{I}} \;
{\mathcal{Z}}^{\frac\vartheta2-\frac1\eta}
\end{eqnarray*}
and
\begin{eqnarray*}
\int_{\Omega\cap\{v\ne0\}} |c(x)|\,dx\le\|c\|_{L^q(\Omega)}
\;{\mathcal{Z}}^{1-\frac1{q}}
\le C\,{\mathcal{Z}}^{1-\frac1{q}}.
\end{eqnarray*}
We stress that here the constants denoted by~$C$ are allowed
to depend also on~$\|c\|_{L^q(\Omega)}$.
Plugging this information into~\eqref{CAC-PIV}, we obtain that
\begin{equation*}
\begin{split}&
{\mathcal{I}}
\le 
C\,{\mathcal{I}} \;
{\mathcal{Z}}^{\frac\vartheta2-\frac1\eta} +
C\,k^2\,{\mathcal{Z}}^{1-\frac1{q}}+C\,
\|f\|_{L^q(\Omega)}^2{\mathcal{Z}}^{\vartheta}.
\end{split}\end{equation*} 
Noticing that~$\frac\vartheta2-\frac1\eta>0$ and~$1-\frac1{q}\le\vartheta$,
if~$|\mathcal{Z}|$ is sufficiently small we obtain~\eqref{CAC-PIV-2}, as desired.

We also remark that, by Lemma~\ref{NEWSOB},
$$ \int_{\Omega}v^2(x)\,dx\le\left(
\int_{\Omega}v^\eta(x)\,dx\right)^{\frac2\eta}\,{\mathcal{Z}}^{1-\frac2\eta}\le{\mathcal{I}}\,{\mathcal{Z}}^{1-\frac2\eta}.$$
This and~\eqref{CAC-PIV-2} yield that, if~$|\mathcal{Z}|\le c_\star$,
\begin{equation}\label{CAC-PIV-3}
\begin{split}&
\int_{\Omega}v^2(x)\,dx
\le C\,\big(k^2+
\|f\|_{L^q(\Omega)}^2\big)\,{\mathcal{Z}}^{2-\frac1q-\frac2\eta}
.\end{split}\end{equation} 
We stress that~$2-\frac1q-\frac2\eta>1$,
hence~\eqref{CAC-PIV-3} gives that
\begin{equation}\label{KSD-34ro-1}
\begin{split}&
\int_{\Omega}v^2(x)\,dx
\le C\,\big(k^2+
\|f\|_{L^q(\Omega)}^2\big)\,{\mathcal{Z}}^{1+\epsilon_0}
\end{split}\end{equation} 
for some~$\epsilon_0>0$.

That is, setting~$A(k):=\Omega\cap\{u> k\}$
and
$$\varphi(k):=\int_{A(k)}(u(x)-k)^2(x)\,dx:=\int_{\Omega}v^2(x)\,dx,$$
in light of~\eqref{KSD-34ro-1}
we can write that, if~$|A(k)|\le c_\star$, then
\begin{equation}\label{KSD-34ro-2}
\begin{split}&
\varphi(k)
\le C\,\big(k^2+
\|f\|_{L^q(\Omega)}^2\big)\,|A(k)|^{1+\epsilon_0}.
\end{split}\end{equation} 
We observe that if~$x\in A(k)$ then~$u(x)>k$ and thus~$u^+(x)>k$.
Therefore,
$$ |A(k)|\le \frac{1}{k}\int_{A(k)} u^+(x)\,dx\le
\frac{\sqrt{|A(k)|}}{k}\,\|u^+\|_{L^2(\Omega)}.$$
Hence, it follows that
\begin{equation}\label{COKAPPA-0} |A(k)|\le\left(\frac{\|u^+\|_{L^2(\Omega)}}{k}\right)^2
\le c_\star,\end{equation}
as long as
\begin{equation}\label{COKAPPA}
k\ge \frac{\|u^+\|_{L^2(\Omega)}}{\sqrt{c_\star}}=:\kappa.
\end{equation}
In particular, in view of~\eqref{COKAPPA-0},
we know that~\eqref{KSD-34ro-2} holds true for all~$k$ satisfying~\eqref{COKAPPA}.

Now we define, for every~$\ell\in\N$,
\begin{eqnarray*}
&&K:=\kappa+\|f\|_{L^q(\Omega)}
\\{\mbox{and }} &&k_\ell:= \kappa+K\left(1-\frac1{2^\ell}\right).\end{eqnarray*}
We point out that
$$ k_\ell-k_{\ell-1}=\frac{K}{2^\ell},$$
and, as a result, if~$x\in A(k_\ell)$ then~$u(x)- k_{\ell-1}\ge k_\ell-k_{\ell-1}=\frac{K}{2^\ell}$.

For this reason, we have that 
$$ A(k_\ell)\le\frac{2^{2\ell}}{K^2}\int_{A(k_\ell)} (u(x)-k_{\ell-1})^2\,dx\le
\frac{2^{2\ell}}{K^2}\int_{A(k_{\ell-1})} (u(x)-k_{\ell-1})^2\,dx=\frac{2^{2\ell}}{K^2}\varphi(k_{\ell-1}).$$
Using this information together with~\eqref{KSD-34ro-2} (exploited here with~$k:=k_\ell$,
and we remark that~$k_\ell\ge\kappa$, hence condition~\eqref{COKAPPA}
is satisfied),
we discover that
\begin{equation}\label{KSD-34ro-3}
\begin{split}&
\varphi(k_\ell)
\le \frac{C^\ell\,(k_\ell^2+
\|f\|_{L^q(\Omega)}^2)}{K^2}\,(\varphi(k_{\ell-1}))^{1+\epsilon_0}.
\end{split}\end{equation} 
Since~$k_\ell\le \kappa+K$, up to renaming constants we obtain from~\eqref{KSD-34ro-3}
that
\begin{equation*}
\varphi(k_\ell)
\le \frac{C^\ell\,(\kappa^2+K^2)}{K^2}\,(\varphi(k_{\ell-1}))^{1+\epsilon_0},
\end{equation*} 
and consequently, if~$c_\star$ is sufficiently small,
$$ 0=\lim_{\ell\to+\infty}\varphi(k_\ell)=\varphi(\kappa+K).
$$
As a result, $u^+(x)\le\kappa+K$, whence the claim in~\eqref{BOU-o1}
plainly follows.
\end{proof}

\begin{bibdiv}
\begin{biblist}

\bib{ABATANGELO}{article}{
author={Abatangelo, Nicola},
author={Cozzi, Matteo},
        title = {An elliptic boundary value problem with fractional 
nonlinearity},
      journal = {arXiv e-prints},
        date = {2020},
          eid = {arXiv:2005.09515},
        pages = {arXiv:2005.09515},
archivePrefix = {arXiv},
       eprint = {2005.09515},
       adsurl = {https://ui.adsabs.harvard.edu/abs/2020arXiv200509515A},
      adsnote = {Provided by the SAO/NASA Astrophysics Data System}
}

\bib{MR3912710}{article}{
   author={Affili, Elisa},
   author={Valdinoci, Enrico},
   title={Decay estimates for evolution equations with classical and
   fractional time-derivatives},
   journal={J. Differential Equations},
   volume={266},
   date={2019},
   number={7},
   pages={4027--4060},
   issn={0022-0396},
   review={\MR{3912710}},
   doi={10.1016/j.jde.2018.09.031},
}

\bib{MR3169773}{article}{
   author={Alfaro, Matthieu},
   author={Coville, J\'{e}r\^{o}me},
   author={Raoul, Ga\"{e}l},
   title={Travelling waves in a nonlocal reaction-diffusion equation as a
   model for a population structured by a space variable and a phenotypic
   trait},
   journal={Comm. Partial Differential Equations},
   volume={38},
   date={2013},
   number={12},
   pages={2126--2154},
   issn={0360-5302},
   review={\MR{3169773}},
   doi={10.1080/03605302.2013.828069},
}

\bib{2019arXiv190702495A}{article}{
       author = {Alibaud, Natha{\"e}l},
author={del Teso, F{\'e}lix},
author={Endal, J{\o}rgen},
author={Jakobsen, Espen R.},
        title = {The Liouville theorem and linear operators satisfying the maximum principle},
      journal = {arXiv e-prints},
date = {2019},
          eid = {arXiv:1907.02495},
        pages = {arXiv:1907.02495},
archivePrefix = {arXiv},
       eprint = {1907.02495},
 primaryClass = {math.AP},
       adsurl = {https://ui.adsabs.harvard.edu/abs/2019arXiv190702495A},
      adsnote = {Provided by the SAO/NASA Astrophysics Data System}
}

\bib{MR2601079}{article}{
   author={Apreutesei, Narcisa},
   author={Bessonov, Nikolai},
   author={Volpert, Vitaly},
   author={Vougalter, Vitali},
   title={Spatial structures and generalized travelling waves for an
   integro-differential equation},
   journal={Discrete Contin. Dyn. Syst. Ser. B},
   volume={13},
   date={2010},
   number={3},
   pages={537--557},
   issn={1531-3492},
   review={\MR{2601079}},
   doi={10.3934/dcdsb.2010.13.537},
}

\bib{MR2911421}{article}{
   author={Barles, Guy},
   author={Chasseigne, Emmanuel},
   author={Ciomaga, Adina},
   author={Imbert, Cyril},
   title={Lipschitz regularity of solutions for mixed integro-differential
   equations},
   journal={J. Differential Equations},
   volume={252},
   date={2012},
   number={11},
   pages={6012--6060},
   issn={0022-0396},
   review={\MR{2911421}},
   doi={10.1016/j.jde.2012.02.013},
}

\bib{MR3194684}{article}{
   author={Barles, Guy},
   author={Chasseigne, Emmanuel},
   author={Ciomaga, Adina},
   author={Imbert, Cyril},
   title={Large time behavior of periodic viscosity solutions for uniformly
   parabolic integro-differential equations},
   journal={Calc. Var. Partial Differential Equations},
   volume={50},
   date={2014},
   number={1-2},
   pages={283--304},
   issn={0944-2669},
   review={\MR{3194684}},
   doi={10.1007/s00526-013-0636-2},
}

\bib{MR2422079}{article}{
   author={Barles, Guy},
   author={Imbert, Cyril},
   title={Second-order elliptic integro-differential equations: viscosity
   solutions' theory revisited},
   journal={Ann. Inst. H. Poincar\'{e} Anal. Non Lin\'{e}aire},
   volume={25},
   date={2008},
   number={3},
   pages={567--585},
   issn={0294-1449},
   review={\MR{2422079}},
   doi={10.1016/j.anihpc.2007.02.007},
}

\bib{MR2095633}{article}{
   author={Bass, Richard F.},
   author={Kassmann, Moritz},
   title={Harnack inequalities for non-local operators of variable order},
   journal={Trans. Amer. Math. Soc.},
   volume={357},
   date={2005},
   number={2},
   pages={837--850},
   issn={0002-9947},
   review={\MR{2095633}},
   doi={10.1090/S0002-9947-04-03549-4},
}

\bib{MR2180302}{article}{
   author={Bass, Richard F.},
   author={Kassmann, Moritz},
   title={H\"{o}lder continuity of harmonic functions with respect to operators
   of variable order},
   journal={Comm. Partial Differential Equations},
   volume={30},
   date={2005},
   number={7-9},
   pages={1249--1259},
   issn={0360-5302},
   review={\MR{2180302}},
   doi={10.1080/03605300500257677},
}

\bib{MR3498523}{article}{
   author={Berestycki, Henri},
   author={Coville, J\'{e}r\^{o}me},
   author={Vo, Hoang-Hung},
   title={Persistence criteria for populations with non-local dispersion},
   journal={J. Math. Biol.},
   volume={72},
   date={2016},
   number={7},
   pages={1693--1745},
   issn={0303-6812},
   review={\MR{3498523}},
   doi={10.1007/s00285-015-0911-2},
}

\bib{biagvecc}{article}{
       author = {Biagi, Stefano},
author={Dipierro, Serena},
author={Valdinoci, Enrico},
author={Vecchi, Eugenio},
        title = {Mixed local and nonlocal elliptic operators:
regularity and maximum principles},
      journal = {arXiv e-prints},
date = {2020},
          eid = {arXiv:2005.06907},
        pages = {arXiv:2005.06907},
archivePrefix = {arXiv},
       eprint = {2005.06907},
       adsurl = {https://ui.adsabs.harvard.edu/abs/2020arXiv200506907B},
      adsnote = {Provided by the SAO/NASA Astrophysics Data System}
}

\bib{MR2653895}{article}{
   author={Biswas, Imran H.},
   author={Jakobsen, Espen R.},
   author={Karlsen, Kenneth H.},
   title={Viscosity solutions for a system of integro-PDEs and connections
   to optimal switching and control of jump-diffusion processes},
   journal={Appl. Math. Optim.},
   volume={62},
   date={2010},
   number={1},
   pages={47--80},
   issn={0095-4616},
   review={\MR{2653895}},
   doi={10.1007/s00245-009-9095-8},
}

\bib{PhysRevE.87.063106}{article}{
  author = {Blazevski, Daniel},
author={del-Castillo-Negrete, Diego},
  title = {Local and nonlocal anisotropic transport
in reversed shear magnetic fields: Shearless Cantori and nondiffusive transport},
  journal = {Phys. Rev. E},
  volume = {87},
  issue = {6},
  pages = {063106},
  numpages = {15},
  year = {2013},
  doi = {10.1103/PhysRevE.87.063106},
  url = {https://link.aps.org/doi/10.1103/PhysRevE.87.063106}
}

\bib{MR3639140}{article}{
   author={Bonnefon, Olivier},
   author={Coville, J\'{e}r\^{o}me},
   author={Legendre, Guillaume},
   title={Concentration phenomenon in some non-local equation},
   journal={Discrete Contin. Dyn. Syst. Ser. B},
   volume={22},
   date={2017},
   number={3},
   pages={763--781},
   issn={1531-3492},
   review={\MR{3639140}},
   doi={10.3934/dcdsb.2017037},
}

\bib{MR697382}{book}{
   author={Brezis, Ha\"{\i}m},
   title={Analyse fonctionnelle},
   language={French},
   series={Collection Math\'{e}matiques Appliqu\'{e}es pour la Ma\^{\i}trise. [Collection
   of Applied Mathematics for the Master's Degree]},
   note={Th\'{e}orie et applications. [Theory and applications]},
   publisher={Masson, Paris},
   date={1983},
   pages={xiv+234},
   isbn={2-225-77198-7},
   review={\MR{697382}},
}

\bib{MR576277}{article}{
   author={Brown, K. J.},
   author={Lin, S. S.},
   title={On the existence of positive eigenfunctions for an eigenvalue
   problem with indefinite weight function},
   journal={J. Math. Anal. Appl.},
   volume={75},
   date={1980},
   number={1},
   pages={112--120},
   issn={0022-247X},
   review={\MR{576277}},
   doi={10.1016/0022-247X(80)90309-1},
}

\bib{MR3485125}{article}{
   author={Cabr\'{e}, Xavier},
   author={Serra, Joaquim},
   title={An extension problem for sums of fractional Laplacians and 1-D
   symmetry of phase transitions},
   journal={Nonlinear Anal.},
   volume={137},
   date={2016},
   pages={246--265},
   issn={0362-546X},
   review={\MR{3485125}},
   doi={10.1016/j.na.2015.12.014},
}

\bib{CABRE}{article}{
   author={Cabr\'{e}, Xavier},
   author={Dipierro, Serena},
author={Valdinoci, Enrico},
title={The Bernstein technique for integro-differential
equations},
      journal = {preprint},
}

\bib{3579567}{article}{
   author={Caffarelli, Luis},
   author={Dipierro, Serena},
   author={Valdinoci, Enrico},
   title={A logistic equation with nonlocal interactions},
   journal={Kinet. Relat. Models},
   volume={10},
   date={2017},
   number={1},
   pages={141--170},
   issn={1937-5093},
   review={\MR{3579567}},
   doi={10.3934/krm.2017006},
}

\bib{MR3051400}{article}{
   author={Caffarelli, Luis},
   author={Valdinoci, Enrico},
   title={A priori bounds for solutions of a nonlocal evolution PDE},
   conference={
      title={Analysis and numerics of partial differential equations},
   },
   book={
      series={Springer INdAM Ser.},
      volume={4},
      publisher={Springer, Milan},
   },
   date={2013},
   pages={141--163},
   review={\MR{3051400}},
   doi={10.1007/978-88-470-2592-9\_10},
}

\bib{MR2332679}{article}{
   author={Cantrell, Robert Stephen},
   author={Cosner, Chris},
   author={Lou, Yuan},
   title={Advection-mediated coexistence of competing species},
   journal={Proc. Roy. Soc. Edinburgh Sect. A},
   volume={137},
   date={2007},
   number={3},
   pages={497--518},
   issn={0308-2105},
   review={\MR{2332679}},
   doi={10.1017/S0308210506000047},
}

\bib{MR3026598}{article}{
   author={Cantrell, Robert Stephen},
   author={Cosner, Chris},
   author={Lou, Yuan},
   author={Ryan, Daniel},
   title={Evolutionary stability of ideal free dispersal strategies: a
   nonlocal dispersal model},
   journal={Can. Appl. Math. Q.},
   volume={20},
   date={2012},
   number={1},
   pages={15--38},
   issn={1073-1849},
   review={\MR{3026598}},
}

\bib{MR2411225}{article}{
   author={Chen, Xinfu},
   author={Hambrock, Richard},
   author={Lou, Yuan},
   title={Evolution of conditional dispersal: a reaction-diffusion-advection
   model},
   journal={J. Math. Biol.},
   volume={57},
   date={2008},
   number={3},
   pages={361--386},
   issn={0303-6812},
   review={\MR{2411225}},
   doi={10.1007/s00285-008-0166-2},
}

\bib{MR2928344}{article}{
   author={Chen, Zhen-Qing},
   author={Kim, Panki},
   author={Song, Renming},
   author={Vondra\v{c}ek, Zoran},
   title={Sharp Green function estimates for $\Delta+\Delta^{\alpha/2}$ in
   $C^{1,1}$ open sets and their applications},
   journal={Illinois J. Math.},
   volume={54},
   date={2010},
   number={3},
   pages={981--1024 (2012)},
   issn={0019-2082},
   review={\MR{2928344}},
}

\bib{MR2912450}{article}{
   author={Chen, Zhen-Qing},
   author={Kim, Panki},
   author={Song, Renming},
   author={Vondra\v{c}ek, Zoran},
   title={Boundary Harnack principle for $\Delta+\Delta^{\alpha/2}$},
   journal={Trans. Amer. Math. Soc.},
   volume={364},
   date={2012},
   number={8},
   pages={4169--4205},
   issn={0002-9947},
   review={\MR{2912450}},
   doi={10.1090/S0002-9947-2012-05542-5},
}

\bib{MR2963799}{article}{
   author={Ciomaga, Adina},
   title={On the strong maximum principle for second-order nonlinear
   parabolic integro-differential equations},
   journal={Adv. Differential Equations},
   volume={17},
   date={2012},
   number={7-8},
   pages={635--671},
   issn={1079-9389},
   review={\MR{2963799}},
}

\bib{MR2897881}{article}{
   author={Cosner, Chris},
   author={D\'{a}vila, Juan},
   author={Mart\'{\i}nez, Salome},
   title={Evolutionary stability of ideal free nonlocal dispersal},
   journal={J. Biol. Dyn.},
   volume={6},
   date={2012},
   number={2},
   pages={395--405},
   issn={1751-3758},
   review={\MR{2897881}},
   doi={10.1080/17513758.2011.588341},
}

\bib{MR3285831}{article}{
   author={Coville, J\'{e}r\^{o}me},
   title={Nonlocal refuge model with a partial control},
   journal={Discrete Contin. Dyn. Syst.},
   volume={35},
   date={2015},
   number={4},
   pages={1421--1446},
   issn={1078-0947},
   review={\MR{3285831}},
   doi={10.3934/dcds.2015.35.1421},
}

\bib{MR3035974}{article}{
   author={Coville, J\'{e}r\^{o}me},
   author={D\'{a}vila, Juan},
   author={Mart\'{\i}nez, Salom\'{e}},
   title={Pulsating fronts for nonlocal dispersion and KPP nonlinearity},
   journal={Ann. Inst. H. Poincar\'{e} Anal. Non Lin\'{e}aire},
   volume={30},
   date={2013},
   number={2},
   pages={179--223},
   issn={0294-1449},
   review={\MR{3035974}},
   doi={10.1016/j.anihpc.2012.07.005},
}

\bib{defi}{article}{
   author={de Figueiredo, Djairo Guedes},
   title={Positive solutions of semilinear elliptic problems},
   conference={
      title={Differential equations},
      address={S\~ao Paulo},
      date={1981},
   },
   book={
      series={Lecture Notes in Math.},
      volume={957},
      publisher={Springer, Berlin-New York},
   },
   date={1982},
   pages={34--87},
   review={\MR{679140}},
}

\bib{MR2542727}{article}{
   author={de la Llave, Rafael},
   author={Valdinoci, Enrico},
   title={A generalization of Aubry-Mather theory to partial differential
   equations and pseudo-differential equations},
   journal={Ann. Inst. H. Poincar\'{e} Anal. Non Lin\'{e}aire},
   volume={26},
   date={2009},
   number={4},
   pages={1309--1344},
   issn={0294-1449},
   review={\MR{2542727}},
   doi={10.1016/j.anihpc.2008.11.002},
}

\bib{2018arXiv181107667D}{article}{
       author = {Dell'Oro, Filippo},
author={Pata, Vittorino},
        title = {Second order linear evolution equations with general dissipation},
      journal = {arXiv e-prints},
         date = {2018},
          eid = {arXiv:1811.07667},
        pages = {arXiv:1811.07667},
archivePrefix = {arXiv},
       eprint = {1811.07667},
 primaryClass = {math.AP},
       adsurl = {https://ui.adsabs.harvard.edu/abs/2018arXiv181107667D},
      adsnote = {Provided by the SAO/NASA Astrophysics Data System}
}

\bib{2017arXiv170605306D}{article}{
       author = {del Teso, F{\'e}lix},
author = {Endal, J{\o}rgen},
author = {Jakobsen, Espen R.},
        title = {On distributional solutions of local and nonlocal problems of porous medium type},
      journal = {arXiv e-prints},
date = {2017},
          eid = {arXiv:1706.05306},
        pages = {arXiv:1706.05306},
archivePrefix = {arXiv},
       eprint = {1706.05306},
 primaryClass = {math.AP},
       adsurl = {https://ui.adsabs.harvard.edu/abs/2017arXiv170605306D},
      adsnote = {Provided by the SAO/NASA Astrophysics Data System}
}

\bib{MR3237774}{article}{
   author={Di Castro, Agnese},
   author={Kuusi, Tuomo},
   author={Palatucci, Giampiero},
   title={Nonlocal Harnack inequalities},
   journal={J. Funct. Anal.},
   volume={267},
   date={2014},
   number={6},
   pages={1807--1836},
   issn={0022-1236},
   review={\MR{3237774}},
   doi={10.1016/j.jfa.2014.05.023},
}

\bib{MR3542614}{article}{
   author={Di Castro, Agnese},
   author={Kuusi, Tuomo},
   author={Palatucci, Giampiero},
   title={Local behavior of fractional $p$-minimizers},
   journal={Ann. Inst. H. Poincar\'{e} Anal. Non Lin\'{e}aire},
   volume={33},
   date={2016},
   number={5},
   pages={1279--1299},
   issn={0294-1449},
   review={\MR{3542614}},
   doi={10.1016/j.anihpc.2015.04.003},
}

\bib{MR2944369}{article}{
   author={Di Nezza, Eleonora},
   author={Palatucci, Giampiero},
   author={Valdinoci, Enrico},
   title={Hitchhiker's guide to the fractional Sobolev spaces},
   journal={Bull. Sci. Math.},
   volume={136},
   date={2012},
   number={5},
   pages={521--573},
   issn={0007-4497},
   review={\MR{2944369}},
   doi={10.1016/j.bulsci.2011.12.004},
}

\bib{VERO}{article}{
   author={Dipierro, Serena},
   author={Proietti Lippi, Edoardo},
   author={Valdinoci, Enrico},
title={(Non)local logistic equations with Neumann conditions},
      journal = {preprint},
}

\bib{MR3651008}{article}{
   author={Dipierro, Serena},
   author={Ros-Oton, Xavier},
   author={Valdinoci, Enrico},
   title={Nonlocal problems with Neumann boundary conditions},
   journal={Rev. Mat. Iberoam.},
   volume={33},
   date={2017},
   number={2},
   pages={377--416},
   issn={0213-2230},
   review={\MR{3651008}},
   doi={10.4171/RMI/942},
}

\bib{MR3950697}{article}{
   author={Dipierro, Serena},
   author={Valdinoci, Enrico},
   author={Vespri, Vincenzo},
   title={Decay estimates for evolutionary equations with fractional
   time-diffusion},
   journal={J. Evol. Equ.},
   volume={19},
   date={2019},
   number={2},
   pages={435--462},
   issn={1424-3199},
   review={\MR{3950697}},
   doi={10.1007/s00028-019-00482-z},
}

\bib{MR1636644}{article}{
   author={Dockery, Jack},
   author={Hutson, Vivian},
   author={Mischaikow, Konstantin},
   author={Pernarowski, Mark},
   title={The evolution of slow dispersal rates: a reaction diffusion model},
   journal={J. Math. Biol.},
   volume={37},
   date={1998},
   number={1},
   pages={61--83},
   issn={0303-6812},
   review={\MR{1636644}},
   doi={10.1007/s002850050120},
}

\bib{MR2597943}{book}{
   author={Evans, Lawrence C.},
   title={Partial differential equations},
   series={Graduate Studies in Mathematics},
   volume={19},
   edition={2},
   publisher={American Mathematical Society, Providence, RI},
   date={2010},
   pages={xxii+749},
   isbn={978-0-8218-4974-3},
   review={\MR{2597943}},
   doi={10.1090/gsm/019},
}

\bib{MR1911531}{book}{
   author={Garroni, Maria Giovanna},
   author={Menaldi, Jose Luis},
   title={Second order elliptic integro-differential problems},
   series={Chapman \& Hall/CRC Research Notes in Mathematics},
   volume={430},
   publisher={Chapman \& Hall/CRC, Boca Raton, FL},
   date={2002},
   pages={xvi+221},
   isbn={1-58488-200-X},
   review={\MR{1911531}},
   doi={10.1201/9781420035797},
}

\bib{MR1669352}{book}{
   author={Han, Qing},
   author={Lin, Fanghua},
   title={Elliptic partial differential equations},
   series={Courant Lecture Notes in Mathematics},
   volume={1},
   publisher={New York University, Courant Institute of Mathematical
   Sciences, New York; American Mathematical Society, Providence, RI},
   date={1997},
   pages={x+144},
   isbn={0-9658703-0-8},
   isbn={0-8218-2691-3},
   review={\MR{1669352}},
}

\bib{MR3593528}{article}{
   author={Iannizzotto, Antonio},
   author={Mosconi, Sunra},
   author={Squassina, Marco},
   title={Global H\"{o}lder regularity for the fractional $p$-Laplacian},
   journal={Rev. Mat. Iberoam.},
   volume={32},
   date={2016},
   number={4},
   pages={1353--1392},
   issn={0213-2230},
   review={\MR{3593528}},
   doi={10.4171/RMI/921},
}

\bib{MR2129093}{article}{
   author={Jakobsen, Espen R.},
   author={Karlsen, Kenneth H.},
   title={Continuous dependence estimates for viscosity solutions of
   integro-PDEs},
   journal={J. Differential Equations},
   volume={212},
   date={2005},
   number={2},
   pages={278--318},
   issn={0022-0396},
   review={\MR{2129093}},
   doi={10.1016/j.jde.2004.06.021},
}

\bib{MR2243708}{article}{
   author={Jakobsen, Espen R.},
   author={Karlsen, Kenneth H.},
   title={A ``maximum principle for semicontinuous functions'' applicable to
   integro-partial differential equations},
   journal={NoDEA Nonlinear Differential Equations Appl.},
   volume={13},
   date={2006},
   number={2},
   pages={137--165},
   issn={1021-9722},
   review={\MR{2243708}},
   doi={10.1007/s00030-005-0031-6},
}

\bib{MR2924452}{article}{
   author={Kao, Chiu-Yen},
   author={Lou, Yuan},
   author={Shen, Wenxian},
   title={Evolution of mixed dispersal in periodic environments},
   journal={Discrete Contin. Dyn. Syst. Ser. B},
   volume={17},
   date={2012},
   number={6},
   pages={2047--2072},
   issn={1531-3492},
   review={\MR{2924452}},
   doi={10.3934/dcdsb.2012.17.2047},
}

\bib{MR3590678}{article}{
   author={Massaccesi, Annalisa},
   author={Valdinoci, Enrico},
   title={Is a nonlocal diffusion strategy convenient for biological
   populations in competition?},
   journal={J. Math. Biol.},
   volume={74},
   date={2017},
   number={1-2},
   pages={113--147},
   issn={0303-6812},
   review={\MR{3590678}},
   doi={10.1007/s00285-016-1019-z},
}

\bib{MR3082317}{article}{
   author={Montefusco, Eugenio},
   author={Pellacci, Benedetta},
   author={Verzini, Gianmaria},
   title={Fractional diffusion with Neumann boundary conditions: the
   logistic equation},
   journal={Discrete Contin. Dyn. Syst. Ser. B},
   volume={18},
   date={2013},
   number={8},
   pages={2175--2202},
   issn={1531-3492},
   review={\MR{3082317}},
   doi={10.3934/dcdsb.2013.18.2175},
}

\bib{MR3771424}{article}{
   author={Pellacci, Benedetta},
   author={Verzini, Gianmaria},
   title={Best dispersal strategies in spatially heterogeneous environments:
   optimization of the principal eigenvalue for indefinite fractional
   Neumann problems},
   journal={J. Math. Biol.},
   volume={76},
   date={2018},
   number={6},
   pages={1357--1386},
   issn={0303-6812},
   review={\MR{3771424}},
   doi={10.1007/s00285-017-1180-z},
}

\bib{MR3060890}{article}{
   author={Servadei, Raffaella},
   author={Valdinoci, Enrico},
   title={A Brezis-Nirenberg result for non-local critical equations in low
   dimension},
   journal={Commun. Pure Appl. Anal.},
   volume={12},
   date={2013},
   number={6},
   pages={2445--2464},
   issn={1534-0392},
   review={\MR{3060890}},
   doi={10.3934/cpaa.2013.12.2445},
}

\bib{MR3161511}{article}{
   author={Servadei, Raffaella},
   author={Valdinoci, Enrico},
   title={Weak and viscosity solutions of the fractional Laplace equation},
   journal={Publ. Mat.},
   volume={58},
   date={2014},
   number={1},
   pages={133--154},
   issn={0214-1493},
   review={\MR{3161511}},
}

\bib{MR3590646}{article}{
   author={Sprekels, J\"{u}rgen},
   author={Valdinoci, Enrico},
   title={A new type of identification problems: optimizing the fractional
   order in a nonlocal evolution equation},
   journal={SIAM J. Control Optim.},
   volume={55},
   date={2017},
   number={1},
   pages={70--93},
   issn={0363-0129},
   review={\MR{3590646}},
   doi={10.1137/16M105575X},
}

\bib{NATU}{article}{
   author={Viswanathan, G. M.},
   author={Afanasyev, V.},
   author={Buldyrev, S. V.},
   author={Murphy, E. J.},
   author={Prince, P. A.},
   author={Stanley, H. E.},
   title={L\'evy flight search patterns of wandering albatrosses},
   journal={Nature},
   volume={381},
   date={1996},
   pages={413--415},
   issn={1476-4687},
   doi={10.1038/381413a0},
}

\end{biblist}\end{bibdiv}

\end{document}